\newtheorem{teo}{Theorem}
\newtheorem{cor}[teo]{Corollary}
\newtheorem{prop}[teo]{Proposition}
\theoremstyle{definition}
\newtheorem{defi}[teo]{Definition}
\newtheorem{ejem}[teo]{Example}
\newcommand{\C}{\mathcal{C}}
\newcommand{\A}{\mathcal{A}}
\newcommand{\B}{\mathcal{B}}
\newcommand{\Tr}{\operatorname{Tr}}
\newcommand{\tr}{\operatorname{tr}}
\newcommand{\EV}{\operatorname{EV}}
\newcommand{\Du}{\operatorname{D}(\omega)}
\begin{document}
	\onehalfspace

	\title{\Large{Polynomial with cyclic monotone elements with applications to random matrices with discrete spectrum}}
\author{\large{Octavio Arizmendi, Adri\'an Celestino}}
\date{\today}
	{\maketitle}
	\begin{abstract}
	We provide a generalization and new proofs of the formulas of Collins, Hasebe and Sakuma for the spectrum of polynomials in cyclic monotone elements. This is applied to random matrices with discrete spectrum.
	\end{abstract}

\section{Introduction}

Free Probability Theory  has been used widely to describe the behavior of Random Matrices of large size. The first result comes from Voiculescu's paper \cite{Voi91}, where he discovered asymptotic freeness of Gaussian matrices. Other variations and extensions of free probability such as Second Order Freeness \cite{MSS}, Traffic Freeness \cite{Male},  Infinitesimal Freeness \cite{Shl} and Matricial Freeness \cite{Len} have found applications to Random Matrix Theory.

A new notion of independence, \emph{cyclically monotone independence}, was introduced recently by Collins, Hasebe and Sakuma in \cite{CHS}, where they show that it describes the limiting joint distribution of a pair of families of random matrices $A=\{A_1,....,A_n\}$, $B=\{B_1,....,B_n\}$.
The main difference between previous results is that the set of random matrices $A$ is assumed to converge w.r.t the trace to a set of trace class operators.  This  paper  is  a  continuation  of \cite{CHS}. Our main results extend the applicability of the theory by describing how to obtain the limiting eigenvalues for a large class of polynomials in cyclic monotone variables.

To be more precise, apart from an asymptotic result on Random Matrices, the authors of \cite{CHS} found, quite explicitly, the set eigenvalues for some polynomials of degree $2$ and $3$ in cyclically monotone elements. As one example,  let us state their result about the eigenvalue set, denoted by EV, for the commutator and anticommutator.
\begin{teo}\cite{CHS}
\label{MainCHS}
Let $(\A,\omega,\tau)$ be a non-commutative probability space with tracial weight $\omega$. Consider $a\in \Du$ and $b \in\A$  such that $a$ is a trace class operator with respect to $\omega$ and suppose that $a,b$ are selfadjoint and the pair $(a,b)$ is cyclically monotone with respect to $(\omega,\tau)$.
\begin{enumerate}

\item If $p=\sqrt{\tau(b^2)}+\tau(b)$ and $q = -\sqrt{\tau(b^2)}+ \tau(b)$  then
$$\operatorname{EV}(ab+ba) = (p\operatorname{EV}(a))\sqcup (q\operatorname{EV}(a)).$$
\item If $r = \sqrt{\tau(b^2)-\tau(b)^2}$, then
$$\operatorname{EV}(\operatorname{i}(ab-ba)) = (r\operatorname{EV}(a))\sqcup (-r\operatorname{EV}(a)).$$
\end{enumerate}
\end{teo}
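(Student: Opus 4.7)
The strategy is to prove both identities by matching $\omega$-moments. Since $a$ is trace-class and $b$ is bounded, the operators $ab\pm ba$ are themselves trace-class self-adjoint, so their eigenvalue multisets are determined by the power sums $\omega((ab\pm ba)^n)$; the two claims reduce to the moment identities $\omega((ab+ba)^n)=(p^n+q^n)\omega(a^n)$ and $\omega((\operatorname{i}(ab-ba))^n)=(r^n+(-r)^n)\omega(a^n)$ for every $n\geq1$.

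To compute $\omega((ab+ba)^n)$, I would expand $(ab+ba)^n=\sum_{\epsilon\in\{0,1\}^n}W_\epsilon$, where $\epsilon_i=0$ means the $i$-th factor is $ab$ and $\epsilon_i=1$ means $ba$, so each $W_\epsilon$ is a word of length $2n$ with $n$ copies of each generator. By the cyclic monotone moment formula of \cite{CHS} together with tracality of $\omega$,
\[\omega(W_\epsilon)=\omega(a^n)\prod_j\tau(b^{m_j(\epsilon)}),\]
where $m_j(\epsilon)$ are the sizes of the maximal $b$-blocks read cyclically in $W_\epsilon$. A direct inspection shows $m_j\in\{1,2\}$, with a size-$2$ block occurring exactly at each cyclic index $i$ for which $\epsilon_i=0$ and $\epsilon_{i+1}=1$; denoting this count by $k(\epsilon)$, each summand equals $\omega(a^n)\tau(b^2)^{k(\epsilon)}\tau(b)^{n-2k(\epsilon)}$. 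The resulting $\epsilon$-sum is $\operatorname{tr}(T^n)$ for the $2\times 2$ transfer matrix $T$ with entries $T_{00}=T_{11}=\tau(b)$, $T_{01}=\tau(b^2)$, $T_{10}=1$, whose characteristic polynomial $(\tau(b)-\lambda)^2-\tau(b^2)$ has roots exactly $p$ and $q$, finishing part~(1).

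For the commutator, the same expansion picks up an extra sign $(-1)^{|\epsilon|}$ (counting the $ba$'s) and an overall $\operatorname{i}^n$. Absorbing the factor $(-1)^{\epsilon_i}$ into the rows of $T$ produces the sign-modified matrix $\widetilde{T}$ with $\widetilde{T}_{00}=\tau(b)$, $\widetilde{T}_{01}=\tau(b^2)$, $\widetilde{T}_{10}=-1$, $\widetilde{T}_{11}=-\tau(b)$; its characteristic polynomial $\lambda^2+(\tau(b^2)-\tau(b)^2)=\lambda^2+r^2$ gives eigenvalues $\pm\operatorname{i} r$. Hence $\operatorname{i}^n\operatorname{tr}(\widetilde{T}^n)=\operatorname{i}^n((\operatorname{i} r)^n+(-\operatorname{i} r)^n)=r^n+(-r)^n$, concluding part~(2). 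In particular this handles the parities uniformly, since for odd $n$ the right-hand side vanishes (as does $\operatorname{tr}(\widetilde T^n)$, because $\widetilde T^2=-r^2 I$).

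The main obstacle I anticipate is the correct bookkeeping in the transfer-matrix step: one must distribute the per-$b$ weight across the edges so that singleton $b$-blocks contribute $\tau(b)$ and paired $b$-blocks contribute $\tau(b^2)$ exactly once, while respecting the asymmetry between the ``forward'' $ab$ and ``backward'' $ba$ factors; this is exactly what forces the off-diagonal entries of $T$ to be unequal. A secondary subtlety is the passage from moments back to the eigenvalue multiset, which is justified by the absolute summability of the spectrum of a trace-class self-adjoint operator, so that its counting measure has compact support and is uniquely determined by its power sums.
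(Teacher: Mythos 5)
Your proof is correct, but it takes a genuinely different route from the paper's --- indeed it is much closer in spirit to the original combinatorial argument of \cite{CHS} that this paper is explicitly written to replace. You expand $(ab\pm ba)^n$ into $2^n$ words, reduce each word by the cyclic monotone moment formula (after a cyclic rotation each word contributes $\omega(a^n)\prod_j\tau(b^{m_j})$ over its maximal $b$-blocks, and your identification of the size-$2$ blocks with the cyclic positions where $\epsilon_i=0,\ \epsilon_{i+1}=1$, together with the fact that $k_{01}=k_{10}$ on a cycle, is exactly the right bookkeeping), and then resum by a transfer matrix. The paper instead realizes $ab+ba$ as the $(1,1)$-entry of a product $B_0A_1B_1$ of $2\times 2$ matrices over the algebra, invokes Proposition \ref{p1} (resp.\ Theorem \ref{maintheorem}) to replace the $b$-matrices by their entrywise $\tau$-images, and diagonalizes the resulting scalar matrix. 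Pleasingly, the two routes land on the identical matrix: your $T=\begin{pmatrix}\tau(b)&\tau(b^2)\\ 1&\tau(b)\end{pmatrix}$ is precisely the paper's $\operatorname{id}_2\otimes\,\tau(B_1B_0)$, and your $\widetilde T$ is its signed variant for the commutator. What the paper's linearization buys is uniformity: it handles any selfadjoint polynomial realizable as a $(1,1)$-entry of such a matrix product without redoing any combinatorics, which is the point of Section 3. What your argument buys is that it is completely self-contained and elementary for these two polynomials, at the cost that the block-and-sign bookkeeping must be redone from scratch for each new polynomial and becomes unwieldy already for the degree-$3$ examples.

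Two small points to tighten. First, the opening claim that ``$a$ trace class and $b$ bounded implies $ab\pm ba$ is trace class'' is a Hilbert-space statement that does not literally parse in the abstract setting, where $b$ is merely an element of a $*$-algebra and $ab+ba$ is only known to lie in $\operatorname{I}_{\B}(\A)\subset\Du$; you do not need it a priori, since the moment identity you prove, compared against the genuinely trace class element $\operatorname{diag}(pa,qa)\in M_2(\mathbb{C})\otimes\A$, is what establishes both the trace class property and the eigenvalue multiset. Second, the passage from moments to eigenvalues should be justified by citing Proposition \ref{Cor2.6} rather than by ``the counting measure has compact support and is determined by its power sums'': the counting measure of the spectrum of a trace class operator is in general an infinite measure and the $n=0$ power sum is not available, so the correct statement is exactly the one in that proposition (power sums for $n\geq 1$ of an absolutely summable real sequence determine the multiset of nonzero terms).
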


The original proof of the above theorem was done by combinatorial means, calculating direclty the moments of the above polynomial. As mentioned above, in this paper we extend widely the set of polynomials $P(a_1, a_n, b_1,\dots,b_k)$ for which we can calculate the eigenvalue set from the individual joint distributions of $A:=\{a_1,\dots ,a_n\}$ and 
$B:=\{b_1,\dots ,b_k\}$, when the pair of families $(A,B)$ is assumed to be cyclic independent. As a direct consequence, we are able to describe the asymptotic behavior of the eigenvalues of polynomials in random matrices with discrete spectrum.

Our method has the advantage that it is general and avoids using ad-hoc combinatorial arguments.
Without going into details, let us notice that the distribution of the above polynomials only use a very small information of $b$, namely $\tau(b)$ and $\tau(b^2)$. The important observation is that polynomials may be realized as the entry (1,1) of products of matrices and thus studying matrices with cyclic entries is enough to calculate the above distributions. Once this is observed, the nature of the above formulas will be clear. 

The paper is organized as follows. In Section 2 we give the necessary preliminaries explaining the theory initiated in \cite{CHS} about cyclic monotone independence including the connection with random matrices. In Section 3 we prove the main results on polynomials in cyclic monotone families. Finally, in Section 4 we present applications to random matrices through examples.

\section{Preliminaries}

\subsection{Cyclic Monotone Independence}

\begin{defi}
A \textit{non-commutative probability space} is a pair $(\A,\tau)$ such that $\A$ is a unital $*$-algebra over $\mathbb{C}$ and $\tau:\A\to\mathbb{C}$ is a linear functional such that $\tau(1_\A)=1$.
\end{defi}
Given a non-commutative probability space $(\A,\tau)$, we say that $\tau$ is \textit{tracial} if $\tau(ab)=\tau(ba)$ for all $a,b\in \A$. All the non-commutative probability spaces considered in this paper have the property of traciality.
\par Given a separable Hilbert space $H$, we recall that a selfadjoint compact operator $a$ on $H$ is \textit{trace class} is $\sum_{i=1}^\infty |\lambda_i|<\infty$, where $\{\lambda_i\}_i$ is the set of eigenvalues of $a$. The set of trace class operators forms an ideal in the space of bounded operators on $H$, $\mathcal{B}(H)$. One way to establish the later notion in an abstract framework is given in the next definition.
\begin{defi}
Let $\A$ be a $*$-algebra over $\mathbb{C}$. We say that $\omega:\Du\to\mathbb{C}$ is a \textit{tracial weight on $\A$} if $\omega$ is a linear functional defined in a $*$-subalgebra $\Du\subset \A$ such that $\omega$ is tracial ($\omega(ab)=\omega(ba)$ for all $a,b\in \Du$), selfadjoint ($\omega(a^*)=\overline{\omega(a)}$ for all $a\in\Du$), and positive ($\omega(a^* a)\geq 0$ for all $a\in \Du$). The pair $(\A,\omega)$ is called a \textit{non-commutative measure space}.
\end{defi}

If $H$ is a separable Hilbert space, then the trace on $H$, $\Tr_H$, is an example of a tracial weight on $\mathcal{B}(H)$.
\par  We are interested in considering non-commutative probability spaces along with a tracial weight such that the random variables are trace class operators on a separable Hilbert space $H$. More precisely, we have the following definition.

\begin{defi}
Let $(\A,\tau)$ be a non-commutative probability space with tracial weight $\omega$.
\begin{enumerate}
\item Let $a_1,\ldots,a_k\in \Du$. The \textit{distribution of $(a_1,\ldots,a_k)$} is the set of moments of $a_1,\ldots,a_k$ with respect to $\omega$:
$$\left\{\omega\left(a_{i_1}^{\epsilon_1}\cdots a_{i_m}^{\epsilon_m}  \right)\,:\,m\geq 1,\,1\leq i_1,\ldots,i_m\leq k,\, \epsilon_1,\ldots,\epsilon_n\in\{1,*\} \right\}.$$
Given another $*$-algebra $\mathcal{B}$ with a tracial weight $\psi$ and $b_1,\ldots,b_k\in \operatorname{D}(\psi) $, we say that $(b_1,\ldots,b_k)$ has the \textit{same distribution} that of  $(a_1,\ldots,a_k)$ if
$$\omega\left(a_{i_1}^{\epsilon_1}\cdots a_{i_m}^{\epsilon_m}  \right) = \psi\left(b_{i_1}^{\epsilon_1}\cdots b_{i_m}^{\epsilon_m}  \right), $$for any $m\geq1$, $1\leq i_1,\ldots,i_n\leq k,\, \epsilon_1,\ldots,\epsilon_n\in\{1,*\}$.
\item  Let $a_1,\ldots,a_k\in \Du$. We say that $(a_1,\ldots,a_k)$ is \textit{trace class with respect $\omega$} if the distribution of $(a_1,\ldots,a_k)$ with respect $\omega$ is the same that of the distribution of $(x_1,\ldots,x_k)$ with respect $\Tr_H$, where $H$ is a separable Hilbert space, $x_1,\ldots,x_k$ are trace class operators on $H$, and $\Tr_H$ is the trace on $H$.
\item If $a\in \A$ is trace class with respect $\omega$, let $x$ be a trace class operator on a separable Hilbert space $H$ with the same distribution that of $a$. We define the \textit{eigenvalues of $a$} as the eigenvalues of $x$. We denote the multiset of eigenvalues of $a$ as $\EV(a)$.
\end{enumerate}
\end{defi}

The following proposition follows immediately from Corollary 2.6 in \cite{CHS} and it is fundamental in the proofs of our results.

\begin{prop}[Corollary 2.6 in \cite{CHS}] \label{Cor2.6}
Let $(\A,\omega)$ and $(\mathcal{B},\psi)$ be $*$-algebras over $\mathbb{C}$ with tracial weights. If $a\in \Du$ and $b\in\operatorname{D}(\psi)$ are selfadjoint and trace class elements such that $\omega(a^n)=\psi(b^n)$ for every $n\geq1$, then $\EV(a)=\EV(b)$.
\end{prop}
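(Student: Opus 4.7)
The plan is to reduce the claim to the uniqueness of the Hausdorff moment problem on a compact interval. First, by the very definition of a trace class element with respect to a tracial weight, I may replace $a$ and $b$ by honest trace class selfadjoint operators on separable Hilbert spaces having the same distributions (hence, by definition, the same $\EV$) as the originals. So without loss of generality, $a$ and $b$ are trace class selfadjoint operators on separable Hilbert spaces, and $\omega(a^n) = \Tr(a^n)$, $\psi(b^n) = \Tr(b^n)$.

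By the spectral theorem for compact selfadjoint operators, the eigenvalues $\{\lambda_i\}_{i}$ of $a$ form a real multiset with each nonzero eigenvalue of finite multiplicity and $\sum_i |\lambda_i| < \infty$; in particular they lie in some compact interval $[-R,R]$. Likewise $b$ has eigenvalues $\{\mu_j\}_{j}$ inside the same (after enlarging $R$) compact interval, and the hypothesis becomes
\[
\sum_i \lambda_i^n \;=\; \sum_j \mu_j^n, \qquad n \geq 1.
\]
The counting measures $\nu_a = \sum_i \delta_{\lambda_i}$ and $\nu_b = \sum_j \delta_{\mu_j}$ may fail to be finite (zero can be an accumulation point), so I regularise by setting $d\mu_a(t) = t^2\, d\nu_a(t)$ and $d\mu_b(t) = t^2\, d\nu_b(t)$. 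These are finite positive Borel measures supported in $[-R,R]$, and their moments are
\[
\int t^n \, d\mu_a(t) \;=\; \omega(a^{n+2}) \;=\; \psi(b^{n+2}) \;=\; \int t^n \, d\mu_b(t), \qquad n \geq 0.
\]
Because polynomials are uniformly dense in $C([-R,R])$ by Stone--Weierstrass, a finite Borel measure with compact support is determined by its moments, so $\mu_a = \mu_b$. Dividing by $t^2$ on $\mathbb{R}\setminus\{0\}$ gives $\nu_a = \nu_b$ there, which is exactly $\EV(a)=\EV(b)$ as multisets.

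The main subtlety, rather than a genuine obstacle, is that the moments $\omega(a^n)$ carry no information about the kernels of $a$ and $b$, so the equality must be read as an equality of multisets of nonzero eigenvalues (with multiplicity); this matches the standard convention for the eigenvalue multiset of a trace class operator. The only nontrivial idea in the argument is the multiplication by $t^2$ to turn the possibly infinite counting measure $\nu_a$ into a genuine finite measure to which the classical moment uniqueness on a compact interval applies.
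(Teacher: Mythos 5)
The paper does not actually prove this proposition: it is imported verbatim as Corollary 2.6 of \cite{CHS}, with the remark that it ``follows immediately'' from that reference, so there is no in-paper argument to compare yours against. Your proof is correct and self-contained, and the two places where the statement could go wrong are exactly the ones you handle: the counting measure of eigenvalues need not be finite because $0$ can be an accumulation point (and the kernel may be infinite-dimensional), which you fix by passing to $d\mu_a(t)=t^2\,d\nu_a(t)$, a finite compactly supported measure whose moments are the given traces $\omega(a^{n+2})$ and which is therefore determined by them via Stone--Weierstrass; and the moments carry no information about the multiplicity of the eigenvalue $0$, so the conclusion must be (and, under the convention of \cite{CHS} and of this paper, is) an equality of multisets of \emph{nonzero} eigenvalues, which you recover by dividing by $t^2$ away from the origin. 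This measure-theoretic route is arguably cleaner and more elementary than the kind of argument usually given for such statements (analytic continuation of $\sum_i \lambda_i z/(1-\lambda_i z)$ or Fredholm-determinant considerations), at the cost of invoking moment determinacy on a compact interval. The one point worth making explicit, though it is not a gap, is that your opening reduction ``replace $a$ by an honest trace class operator with the same distribution, hence the same $\EV$'' leans on the well-definedness of $\EV$ in the paper's Definition; that well-definedness is itself the special case $\omega=\psi=\Tr_H$ of the proposition you are proving, and your argument does establish that case first, so no circularity actually arises.
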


%

Motivated for the results of Shlyakhtenko in \cite{Shl} about the asymptotic behavior of the moments of products of rotationally invariant random matrices and matrices whose all entries are zero except one of them, the authors of \cite{CHS} defined an abstract notion of independence which fits in the framework of non-commutative probability spaces provided with a tracial weight.

\begin{defi}
Let $(\C,\tau)$ be a non-commutative probability space with a tracial weight $\omega$. Let $\A,\B\subset \C$ be $*$-subalgebras  such that $1_\C\in \B$. Define the $*$-ideal \begin{equation}
\operatorname{I}_{\B}(\A) := \operatorname{span}\left\{b_0a_1b_1\cdots a_nb_n \,:\, n\in \mathbb{N},\, a_1,\ldots,a_n\in\A, b_0,\ldots,b_n\in \B  \right\} 
\end{equation}
We say that the pair $(\A,\B)$ is \textit{cyclically monotonically independent} with respect to $(\omega,\tau)$ if $\operatorname{I}_\B(\A)\subset \Du$ and if for any $n\in \mathbb{N}$, $a_1,\ldots,a_n\in \A$, $b_1,\ldots,b_n\in \B$, we have
\begin{equation}
\omega(a_1b_1a_2b_2\cdots a_nb_n) = \omega(a_1a_2\cdots a_n) \tau(b_1)\tau(b_2)\cdots \tau(b_n).
\end{equation} 
\par We can also define that the pair $(\{a_1,\ldots,a_k\},\{b_1,\ldots,b_\ell\})$ is \textit{cyclically monotone} if the pair $(\operatorname{alg}\{a_1,\ldots,a_k\},\operatorname{alg}\{1_\C,b_1,\ldots,b_\ell\})$ is cyclically monotone, where $a_1,\ldots,a_k\in \Du$ and $b_1,\ldots,b_\ell\in \B$.
\end{defi}

The authors of \cite{CHS} found explicit formulas for the eigenvalues for some polynomials in cyclically monotone elements. The precise statement of their result is the following. 

\begin{teo}[Theorem 3.14 in \cite{CHS}]
\label{T314}
Let $(\A,\tau)$ be a non-commutative probability space with tracial weight $\omega$. Consider $a,a_1,\ldots,a_k\in \Du$ and $b,b_1,\ldots,b_k \in\A$  such that $(a,a_1,\ldots,a_k)$ are trace class with respect to $\omega$ and the pair   $(\{a,a_1,\ldots,a_k \},\{b,b_1,\ldots,b_k \})$ is cyclically monotone with respect to $(\omega,\tau)$. 
\begin{enumerate}
\item If $a_1,\ldots,a_k$ are selfadjoint and $B = (\tau(b_i^*b_j))_{i,j=1}^k$, then
$$\operatorname{EV}\left(\sum_{i=1}^k b_ia_ib_i^* \right) = \operatorname{EV}\left(\sqrt{B}\operatorname{diag}(a_1,\ldots,a_k) \sqrt{B}\right),$$  where $\sqrt{B}\operatorname{diag}(a_1,\ldots,a_k)\sqrt{B} \in (M_k(\mathbb{C})\otimes \mathcal{A}, \operatorname{Tr}_k\otimes\;\omega).$
\item If $b_1,\ldots,b_k$ are selfadjoint, then
$$\operatorname{EV}\left(\sum_{i=1}^k a_ib_ia_i^* \right) = \operatorname{EV}\left(\sum_{i=1}^k \tau(b_i)a_ia_i^* \right).$$
\item If  $a,b$ are selfadjoint, $p=\sqrt{\tau(b^2)}+\tau(b)$ and $q = -\sqrt{\tau(b^2)}+ \tau(b)$  then
$$\operatorname{EV}(ab+ba) = (p\operatorname{EV}(a))\sqcup (q\operatorname{EV}(a)).$$
\item If $a,b$ are selfadjoint and   $r = \sqrt{\tau(b^2)-\tau(b)^2}$, then
$$\operatorname{EV}(\operatorname{i}(ab-ba)) = (r\operatorname{EV}(a))\sqcup (-r\operatorname{EV}(a)).$$
\end{enumerate}
\end{teo}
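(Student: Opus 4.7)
My plan is to prove all four statements uniformly by the method of moments. Since each pair of elements in question is selfadjoint and lies in the domain of the tracial weight under consideration, Proposition~\ref{Cor2.6} reduces every claimed equality of eigenvalue multisets to an equality of $n$-th moments for all $n \ge 1$. The engine of every computation is the cyclic monotone identity
\[
\omega(a_1 b_1 a_2 b_2 \cdots a_n b_n) = \omega(a_1 \cdots a_n)\, \tau(b_1)\cdots \tau(b_n),
\]
together with traciality of $\omega$. The unifying picture, already hinted at in the introduction, is that each polynomial here has the same $\omega$-moments as an element $C\otimes a_\bullet$ in some $(M_k(\mathbb{C})\otimes \A, \Tr_k \otimes \omega)$, where the scalar matrix $C$ is read off from the $\tau$-moments of the $b_i$'s.

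For part (1), I expand the $n$-th moment of $\sum_i b_i a_i b_i^*$ as a sum over multi-indices, cycle the trailing $b_{i_n}^*$ to the front by traciality, and group the adjacent pairs $b_{i_j}^* b_{i_{j+1}}$ (cyclically, with $i_{n+1}:=i_1$) into single $\B$-factors. The cyclic monotone identity then rewrites this moment as
\[
\sum_{i_1, \ldots, i_n} \omega(a_{i_1} \cdots a_{i_n})\prod_{j=1}^{n} B_{i_j, i_{j+1}}.
\]
On the matrix side, cyclic invariance of $\Tr_k \otimes \omega$ gives $(\Tr_k \otimes \omega)((\sqrt{B}D_a\sqrt{B})^n) = (\Tr_k \otimes \omega)((BD_a)^n)$, and since $(BD_a)_{ij} = B_{ij}\, a_j$ this expands into exactly the same multi-index sum. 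Part (2) is the dual computation: I cycle $a_{i_n}^*$ to the front and pair the adjacent $a_{i_j}^* a_{i_{j+1}}$ as $\A$-factors while each $b_{i_j}$ remains a $\B$-factor by itself. The cyclic monotone identity, followed by regrouping into $a_{i_j} a_{i_j}^*$ factors and one more cyclic shift, then produces $\sum_{i_1, \ldots, i_n} \omega((a_{i_1}a_{i_1}^*)\cdots (a_{i_n}a_{i_n}^*))\prod_j \tau(b_{i_j})$, which is precisely the expansion of the right-hand side of (2).

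For part (3), let $\tilde B$ be the real symmetric $2 \times 2$ matrix with $\tau(b)$ on the diagonal and $\sqrt{\tau(b^2)}$ off-diagonal. Its eigenvalues are $p$ and $q$, so $\EV(pa)\sqcup \EV(qa) = \EV(\tilde B \otimes a)$, and it suffices to show $\omega((ab+ba)^n) = (\Tr_2 \otimes \omega)((\tilde B \otimes a)^n) = (p^n+q^n)\omega(a^n)$. I would obtain this by expanding $(ab+ba)^n$ into its $2^n$ monomials $W_\epsilon$ indexed by $\epsilon \in \{0,1\}^n$, using traciality to bring each $W_\epsilon$ into canonical alternating form $a^{k_1} b^{l_1} \cdots a^{k_m} b^{l_m}$, and applying the cyclic monotone identity to get $\omega(a^n)\prod_j \tau(b^{l_j})$. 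Part (4) is handled analogously, with $\tilde B = \operatorname{diag}(r, -r)$; it is convenient to first center $b' := b - \tau(b)\cdot 1$ so that $i(ab-ba) = i(ab'-b'a)$, $\tau(b') = 0$ and $\tau(b'^2) = r^2$.

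The main technical obstacle is the final combinatorial collapse in (3). A case analysis of the transitions in $\epsilon$ shows that the cyclic $a$- and $b$-runs of $W_\epsilon$ have lengths only $1$ or $2$, and the number of length-$2$ $b$-runs equals the number $d$ of cyclic ``$01$-blocks'' in $\epsilon$. The moment therefore collapses to $\omega(a^n)\sum_d N_d\, \tau(b)^{n-2d}\tau(b^2)^d$, and one must verify $N_d = 2\binom{n}{2d}$ for the number of $\epsilon \in \{0,1\}^n$ with exactly $d$ cyclic $01$-blocks, so that the sum matches the binomial expansion of $p^n + q^n$. Part (4) avoids most of this combinatorics because centering makes $\tau(b')=0$ kill every term except $d = n/2$, leaving only the two alternating $\epsilon$'s and directly yielding $(r^n + (-r)^n)\omega(a^n)$.
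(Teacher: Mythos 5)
Your proof is correct, but it takes a genuinely different route from the paper's. The paper deliberately avoids the direct moment expansion: it realizes each polynomial as the $(1,1)$ entry of a product $B_0A_1B_1\cdots$ of matrices with cyclically monotone entries, invokes Proposition \ref{p1} to replace each $B$-matrix by the scalar matrix of its entrywise $\tau$-expectations, and then reads off the eigenvalues by diagonalizing a fixed $k\times k$ (or $2\times 2$) numerical matrix --- e.g.\ for part (3) the whole combinatorial content collapses into computing $\Tr_2$ of powers of $\left(\begin{smallmatrix}\tau(b)&\tau(b^2)\\ 1&\tau(b)\end{smallmatrix}\right)$, whose eigenvalues are $p$ and $q$. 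Your argument is instead the expand-cycle-regroup computation: for (1) and (2) this is essentially Proposition \ref{p1} unfolded (the multi-index sums are identical), but for (3) and (4) you are reconstructing the combinatorial proof of \cite{CHS} that the paper explicitly sets out to replace --- the $2^n$-monomial expansion, the observation that cyclic runs have length $1$ or $2$, the identification of length-$2$ $b$-runs with cyclic $01$-transitions, and the count $N_d=2\binom{n}{2d}$ (all of which check out, as does your centering trick $b'=b-\tau(b)1_\A$ in (4), which is a clean simplification the paper does not use). What the paper's method buys is systematization: the same matrix lift, with no new combinatorics, drives Proposition \ref{p6}, Theorem \ref{chido} and the general-polynomial results of Section 3, whereas your run-length analysis is tailored to the specific shape of $ab+ba$. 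What your method buys is self-containedness: it needs only the defining identity of cyclic monotonicity and traciality, with no detour through $M_n(\A)$.
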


\subsection{Asymptotic Cyclic Monotone Independence of Random Matrices}

An important application of the theory of cyclic monotone independence is the study of matrices with discrete spectrum in the limit as the size $n$ tends to infinity. 

The correct notion to state this connection is that of of \emph{ asymptotic cyclic monotone independence}, which is the analog of asymptotic freeness in this context.

\begin{defi}
\begin{enumerate}
    \item Let $\{(\A_n,\omega_n)\}_{n\geq1}$, $(\A,\omega)$ be $*$-algebras over $\mathbb{C}$ with tracial weights, for every $n\geq1$. Assume that $a_1(n),\ldots,a_k(n)\in \operatorname{D}(\omega_n)$, for every $n\geq1$. We say that  $\{(a_1(n),\ldots,a_k(n))\}_{n\geq1}$ \textit{converges in distribution with respect to $\omega$} if there exist $a_1,\ldots,a_k\in \Du$ such that
    $$\omega_n(a_{i_1}(n)^{\epsilon_{1}}\cdots a_{i_m}(n)^{\epsilon_{m}}) = \omega( a_{i_1}^{\epsilon_{1}}\cdots a_{i_m}^{\epsilon_{m}}),$$ for any $m\geq 1$, $1\leq i_1,\ldots,i_m\leq k$, $\epsilon_1,\ldots,\epsilon_m\in \{1,*\}$.
    \item Let  $\{(\A_n,\tau_n)\}_{n\geq1}$  be  non-commutative  probability  spaces such that $\A_n$ has a tracial weight $\omega_n$, for every $n\geq1$. Let $a_1(n),\ldots ,a_k(n)\in \operatorname{D}(\omega_n),$ $b_1(n),\ldots,b_\ell(n)\in \A_n$.  We say that the pair $(\{a_1(n),\ldots,a_k(n)\},\{b_1(n),\ldots,b_\ell(n)\})$ is  \textit{asymptotically  cyclically  monotone with respect to $(\omega_n,\tau_n)$} if there exist a non-commutative probability space $(\A,\tau)$ with a tracial weight $\omega$,  and elements $a_1,\ldots,a_k\in \Du, b_1,\ldots,b_\ell\in \A$ such that the pair $(\{a_1,\ldots,a_k\},\{b_1,\ldots,b_\ell\})$ is cyclically monotone, and  for any non-commutative $*$-polynomial $P(x_1,\ldots,x_k,y_1,\ldots,y_\ell)$ such that $P(0,\ldots,0,y_1,\ldots,y_\ell) = 0$, we have that $P(a_1(n),\ldots,a_k(n),b_1(n),\ldots,b_\ell(n))\in \operatorname{D}(\omega_n)$ for every $n\geq 1$ and the following limit holds:
    $$\lim_{n\to\infty} \omega_n(P(a_1(n),\ldots,a_k(n),b_1(n),\ldots,b_\ell(n))) = \omega(P(a_1,\ldots,a_k,b_1,\ldots,b_\ell)).$$
\end{enumerate}
\end{defi}

The main application of Collins Hasebe and Sakuma \cite{CHS} to Random Matrix Theory comes from the following theorem, which allows us to give an approximation of the spectrum of certain Random Matrices with discrete spectrum by using polynomials in cyclic mononotone independent elements.

\begin{teo}[Theorem 4.3 in \cite{CHS}]
Let $n\geq1$.  Let $U=U(n)$ be a $n\times n$ Haar unitary random matrix and $A_i=A_i(n)$, $B_j=B_j(n)$, $i= 1,\ldots,k$, $j= 1,\ldots,\ell$ be $n\times n$ deterministic matrices such that $((A_1,\ldots,A_k),\Tr_n)$ converges  in  distribution  to  a $k$-tuple  of  trace  class  operators  as $n\to\infty$, and $((B_1,\ldots,B_\ell),\frac{1}{n}\Tr_n)$ converges in distribution to a $\ell$-tuple of elements in a non-commutative probability space as $n\to\infty$. Then  the  pair $(\{A_1,\ldots,A_k\},\{UB_1U^*,\ldots,UB_\ell U^* \})$ is  asymptotically  cyclically  monotone  almost surely with respect to $(\Tr_n,\frac{1}{n}\Tr_n)$.
\end{teo}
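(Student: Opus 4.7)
The plan is to verify the defining factorization of cyclic monotone independence at the level of mixed moments by direct application of the Weingarten calculus for Haar unitaries, and then to upgrade convergence in expectation to almost sure convergence by a variance estimate. First I would reduce to alternating monomials: since $P(0,\ldots,0,y_1,\ldots,y_\ell)=0$, every monomial in $P$ contains at least one $A$-factor, and using $U^*U=I$ one may absorb consecutive conjugated factors via $UB_{j_s}U^*\cdot UB_{j_{s+1}}U^* = U(B_{j_s}B_{j_{s+1}})U^*$ and, by cyclicity of $\Tr_n$, rearrange so the monomial alternates. It therefore suffices to show that for every $p\ge 1$, every polynomial $M_s$ in the $A_i$'s, and every polynomial $N_s$ in the $B_j$'s,
\[
\Tr_n\bigl(M_1\,UN_1U^*\,M_2\,UN_2U^*\cdots M_p\,UN_pU^*\bigr)\;\xrightarrow[n\to\infty]{\mathrm{a.s.}}\;\omega(m_1\cdots m_p)\prod_{s=1}^{p}\tau(n_s),
\]
which is exactly the cyclic monotone relation for the limits $m_s,n_s$ of $M_s(A),N_s(B)$.

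Next I would expand the trace as a multiple sum over matrix indices and apply the Weingarten formula
\[
\mathbb{E}\!\left[\prod_{s=1}^{p}U_{\alpha_s\beta_s}\overline{U_{\delta_s\gamma_s}}\right]=\sum_{\sigma,\tau\in S_p}\Bigl(\prod_{s=1}^{p}\delta_{\alpha_s,\delta_{\sigma(s)}}\delta_{\beta_s,\gamma_{\tau(s)}}\Bigr)\mathrm{Wg}(\tau\sigma^{-1},n).
\]
The identity pair $\sigma=\tau=e$ collapses the $B$-indices within each block, producing $\prod_s\Tr_n(N_s)$, while simultaneously tying the $A$-indices into a single cycle and producing $\Tr_n(M_1\cdots M_p)$. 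Combined with $\mathrm{Wg}(e_p,n)=n^{-p}(1+O(n^{-2}))$ and the identity $\prod_s\Tr_n(N_s)=n^p\prod_s\tfrac{1}{n}\Tr_n(N_s)$, the two powers of $n^p$ cancel exactly and the convergence hypotheses on $(A_i(n))$ and $(B_j(n))$ deliver the target limit. For $(\sigma,\tau)\neq(e,e)$ one has $\mathrm{Wg}(\pi,n)=O(n^{-p-|\pi|})$ with $|\pi|$ the Cayley distance from the identity, while the constrained index sums, together with uniform moment control on the $B_j$ and trace-norm control on the $A_i$, gain at most a compensating power of $n$; the net contribution is $O(n^{-2})$ and vanishes.

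To pass from convergence in expectation to almost sure convergence I would apply the same Weingarten expansion to $\mathbb{E}[|\Tr_n(P)|^2]$: the block-diagonal pairings reproduce $|\mathbb{E}\Tr_n(P)|^2$ exactly and the remaining contributions are $O(n^{-2})$, so the variance is summable and Borel--Cantelli closes the argument. The main obstacle is the combinatorial bookkeeping---identifying precisely which pairs $(\sigma,\tau)$ contribute at leading order in each monomial and uniformly bounding the rest. This step is entirely analogous to Voiculescu's Weingarten proof of asymptotic freeness; the genuine novelty is the deliberate asymmetry of normalizations ($\Tr_n$ for the trace-class $A_i$ versus $\tfrac{1}{n}\Tr_n$ for the $B_j$), which is precisely what turns the usual free factorization into the multiplicative factorization $\omega(\cdots)\prod_s\tau(\cdots)$ characteristic of cyclic monotone independence.
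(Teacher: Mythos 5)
This statement is quoted in the paper's preliminaries as Theorem 4.3 of \cite{CHS} and is not proved there at all, so there is no in-paper argument to compare yours against; the relevant comparison is with the original proof in \cite{CHS}, which your sketch matches in outline: reduce to alternating words, compute $\mathbb{E}\,\Tr_n$ of each word by the Weingarten formula, identify the pair $\sigma=\tau=e$ as the unique leading term, and upgrade to almost sure convergence via a summable variance bound and Borel--Cantelli. Your leading-order bookkeeping is right: with the word written as $\sum\prod_s (M_s)_{i_sj_s}U_{j_sa_s}(N_s)_{a_sb_s}\overline{U}_{i_{s+1}b_s}$, the identity pair produces $\Tr_n(M_1\cdots M_p)\prod_s\Tr_n(N_s)\cdot\mathrm{Wg}(e,n)$, and since $\mathrm{Wg}(e,n)=n^{-p}(1+O(n^{-2}))$ and $\Tr_n(N_s)=n\,\tr_n(N_s)$, the hypotheses give exactly $\omega(m_1\cdots m_p)\prod_s\tau(n_s)$. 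This is precisely where the asymmetric normalizations do their work, as you note.

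Two points in your sketch deserve to be made precise rather than asserted. First, the bound on the non-identity terms: for general $(\sigma,\tau)$ the constrained sums factor into $N$-traces along the cycles of $\tau$ (each $O(n)$ by the normalized-trace convergence hypothesis) and $M$-traces along the cycles of $s\mapsto\sigma(s)+1$; you need each of the latter to be $O(1)$ uniformly in $n$, which follows from the $\Tr_n$-convergence hypothesis (Cauchy--Schwarz against the bounded Hilbert--Schmidt norms $\Tr_n(M^*M)$, plus convergence of $\Tr_n(M)$ for singleton cycles), but this is a genuine use of the trace-class assumption and should be spelled out --- it is the step that has no analogue in the usual asymptotic freeness proof. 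With that in hand the contribution of $(\sigma,\tau)\neq(e,e)$ is $O(n^{-|\tau|-|\tau\sigma^{-1}|})=o(1)$, which suffices (you do not actually need the sharper $O(n^{-2})$ for the first moment). Second, the theorem also requires exhibiting the limiting cyclically monotone pair $(\{a_i\},\{b_j\})$ in some $(\A,\omega,\tau)$; your computation produces the mixed moments, but a sentence constructing the limit space (as in \cite{CHS}) is needed to close the definition of asymptotic cyclic monotone independence. Modulo these, the plan is sound and is essentially the argument of \cite{CHS}.
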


\section{Main Results}

The main objective of this paper is to broaden the applicability of cyclic monotone independence by extending Theorem \ref{T314}, which computes the eigenvalue set of specific polynomials, to general selfadjoint polynomials.

 \subsection{Matrices with Cyclic Monotone Entries} 
 The first observation is that in the above formulas, the eigenvalues only depend of the elements $b_i$'s through their expectations. Then, it is natural to ask if in general we can replace the elements $b_i$'s in the polynomials by their expectation. As one see from parts 1) and 2) this step is not obvious, but it can done as long as we consider matrices, in the correct order. This is the content of our following result.

\begin{prop}
\label{p1}
Let $(\A,\tau)$ be a non-commutative probability space with a tracial weight $\omega$. Consider $A_p  = \left(a_{ij}^{(p)}\right)_{i,j}^n \in M_n(\operatorname{D}(\omega))$ and $B_q =  \left(b_{ij}^{(q)}\right)_{i,j}^n \in M_n(\A)$ for $p,q=1,\ldots,k$. Assume that $\left(a_{ij}^{(p)}\right)_{i,j=1,p=1,\ldots,k}^n$ are trace class with respect to $\omega$ , and the pair
$$\left( \left\{a_{i,j}^{(p)}\,:\,p=1,\ldots,k,\, i,j=1,\ldots,n  \right\}, \left\{b_{i,j}^{(q)}\,:\,q=1,\ldots,k,\, i,j=1,\ldots,n  \right\} \right)  $$
is cyclically monotone independent with respect to $(\omega,\tau)$. Then
\begin{equation}
\Tr_n\otimes\; \omega\left( A_1B_1A_2B_2\cdots A_kB_k  \right) = \Tr_n\otimes\;\omega \left(A_1B_1^\prime A_2B_2^\prime\cdots A_k B_k^\prime \right),
\end{equation}
where for each $p=1,\ldots,k$ we have that
$$ B_p^\prime = \operatorname{id}_n\otimes \;\tau(B_p) = \left( \tau\left(b_{ij}^{(p)}\right)_{i,j=1}^n\right)\in M_n(\mathbb{C}).$$
\end{prop}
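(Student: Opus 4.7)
The strategy is to expand both traces entry-by-entry and observe that the defining factorization formula of cyclic monotone independence collapses the left-hand side into the right-hand side.

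First, I would write out the left-hand side by unfolding the matrix product and the trace:
\begin{equation*}
\Tr_n\otimes\omega(A_1B_1\cdots A_kB_k)=\sum_{i_1,\ldots,i_{2k}=1}^{n}\omega\!\left(a^{(1)}_{i_1i_2}\,b^{(1)}_{i_2i_3}\,a^{(2)}_{i_3i_4}\,b^{(2)}_{i_4i_5}\cdots a^{(k)}_{i_{2k-1}i_{2k}}\,b^{(k)}_{i_{2k}i_1}\right).
\end{equation*}
Each summand is an alternating product of a single element from the first family with a single element from the second family. Since the pair of sets $(\{a^{(p)}_{ij}\},\{b^{(q)}_{ij}\})$ is cyclically monotone with respect to $(\omega,\tau)$, the defining formula applies term by term and rewrites the summand as
\begin{equation*}
\omega\!\left(a^{(1)}_{i_1i_2}a^{(2)}_{i_3i_4}\cdots a^{(k)}_{i_{2k-1}i_{2k}}\right)\,\tau\!\left(b^{(1)}_{i_2i_3}\right)\tau\!\left(b^{(2)}_{i_4i_5}\right)\cdots\tau\!\left(b^{(k)}_{i_{2k}i_1}\right).
\end{equation*}

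Next, I would expand the right-hand side in the same way. Since $B'_p$ is the scalar matrix with entries $\tau(b^{(p)}_{ij})$, its matrix entries are unaffected by $\omega$, and so
\begin{equation*}
\Tr_n\otimes\omega(A_1B'_1\cdots A_kB'_k)=\sum_{i_1,\ldots,i_{2k}=1}^{n}\omega\!\left(a^{(1)}_{i_1i_2}a^{(2)}_{i_3i_4}\cdots a^{(k)}_{i_{2k-1}i_{2k}}\right)\prod_{p=1}^{k}\tau\!\left(b^{(p)}_{i_{2p}i_{2p+1}}\right),
\end{equation*}
with the convention $i_{2k+1}=i_1$. Comparing this to the expression obtained from the left-hand side, the two sums are identical, so equality follows.

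The only subtlety to address carefully is the bookkeeping of indices: one must check that the cyclic monotone factorization formula, which involves the cyclic order of the $b$'s inside $\omega(a_1b_1\cdots a_kb_k)$, matches the cyclic index convention produced by the trace $\Tr_n$ on the right (namely that the last $b$-factor carries indices $(i_{2k},i_1)$, closing the cycle). This tracial closure of the index cycle is exactly why the statement requires the product of the matrices to be closed by $\Tr_n$ rather than by taking an off-diagonal entry. Beyond this verification, the argument is a direct application of the defining formula of cyclic monotone independence, with no further obstacle.
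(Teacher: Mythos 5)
Your proposal is correct and follows essentially the same route as the paper: expand the trace entrywise, apply the defining factorization of cyclic monotone independence to each summand, and match the result with the entrywise expansion of $\Tr_n\otimes\,\omega(A_1B_1'\cdots A_kB_k')$. The only cosmetic difference is that you compare the two expanded sums directly, whereas the paper reabsorbs the scalars $\tau(b^{(p)}_{ij})$ into $\omega$ to reassemble the right-hand side.
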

 \allowdisplaybreaks
\begin{proof}
Using linearity and cyclic property of $\Tr_n$ and $\omega$ we have that\\
\begin{eqnarray*}
\Tr_n\otimes\; \omega\left( A_1B_1A_2B_2\cdots A_kB_k  \right) &=& \sum_{\substack{i_1,i_2\ldots,i_k=1\\j_1,j_2\ldots,j_k=1}}^n \omega\left( a_{i_1j_i1}^{(1)} b_{j_1i_2}^{(1)}a_{i_2j_2}^{(2)}b_{j_2i_3}^{(2)}\cdots a_{i_kj_k}^{(k)}b_{j_ki_1}^{(k)}\right)
\\ &=&\sum_{\substack{i_1,i_2\ldots,i_k=1\\j_1,j_2\ldots,j_k=1}}^n \omega\left( a_{i_1j_i1}^{(1)} a_{i_2j_2}^{(2)}\cdots a_{i_kj_k}^{(k)}\right)\tau\left( b_{j_1i_2}^{(1)}\right) \tau\left(b_{j_2i_3}^{(2)}\right)\cdots\tau\left( b_{j_ki_1}^{(k)} \right)
\\ &=& \sum_{\substack{i_1,i_2\ldots,i_k=1\\j_1,j_2\ldots,j_k=1}}^n \omega\left( a_{i_1j_i1}^{(1)} \tau\left( b_{j_1i_2}^{(1)}\right)a_{i_2j_2}^{(2)}\tau\left(b_{j_2i_3}^{(2)}\right)\cdots a_{i_kj_k}^{(k)}\tau\left(b_{j_ki_1}^{(k)}\right)\right)
\\ &=& \Tr_n\otimes\;\omega \left(A_1B_1^\prime A_2B_2^\prime\cdots A_k B_k^\prime \right).
\end{eqnarray*}
\end{proof}

Since a power of a matrix $A_1B_1\cdots A_kB_k$ has the same form, we have the following consequence.
\begin{cor}
With the assumptions and notation of Proposition \ref{p1}, for any $m\geq1$ we have that
\begin{equation}
\Tr_n\otimes\; \omega\left(\left( A_1B_1A_2B_2\cdots A_kB_k\right)^m  \right) = \Tr_n\otimes\;\omega \left(\left(A_1B_1^\prime A_2B_2^\prime\cdots A_k B_k^\prime \right)^m\right),
\end{equation}
i.e., the moments of $A_1B_1A_2B_2\cdots A_kB_k$ and $A_1B_1^\prime A_2B_2^\prime\cdots A_kB_k^\prime$ with respect $\Tr_n\otimes\;\omega$  are the same.
\end{cor}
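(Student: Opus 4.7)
The plan is to reduce the statement to a single application of Proposition \ref{p1} after enlarging the list of factors. Indeed, the key observation is that
$$
(A_1B_1A_2B_2\cdots A_kB_k)^m = \widetilde{A}_1\widetilde{B}_1\widetilde{A}_2\widetilde{B}_2\cdots \widetilde{A}_{km}\widetilde{B}_{km},
$$
where for $1\le r\le km$ we set $\widetilde{A}_r:=A_{((r-1)\bmod k)+1}$ and $\widetilde{B}_r:=B_{((r-1)\bmod k)+1}$. That is, the $m$-th power is itself a product of $km$ alternating $A$-type and $B$-type matrices, exactly the shape of the product appearing in Proposition \ref{p1}, only with a longer list.

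Before invoking the proposition I would verify that its hypotheses still hold for this relabeling. The entries of $\widetilde{A}_1,\ldots,\widetilde{A}_{km}$ are merely repetitions of the entries of $A_1,\ldots,A_k$, so they are trace class with respect to $\omega$; similarly, the joint family of entries of the $\widetilde{A}_r$'s and $\widetilde{B}_r$'s coincides (with repetitions) with the family of entries of the $A_p$'s and $B_q$'s, whose cyclic monotone independence with respect to $(\omega,\tau)$ is part of the assumption of Proposition \ref{p1} and of the corollary. Hence Proposition \ref{p1}, applied with $km$ in place of $k$, yields
$$
\Tr_n\otimes\,\omega\!\left(\widetilde{A}_1\widetilde{B}_1\cdots \widetilde{A}_{km}\widetilde{B}_{km}\right)
= \Tr_n\otimes\,\omega\!\left(\widetilde{A}_1\widetilde{B}^\prime_1\cdots \widetilde{A}_{km}\widetilde{B}^\prime_{km}\right).
$$

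To finish, I would note that the operation $B\mapsto B^\prime=\operatorname{id}_n\otimes\,\tau(B)$ is purely entrywise, so $\widetilde{B}^\prime_r=B^\prime_{((r-1)\bmod k)+1}$, and refactoring the right-hand side gives $\Tr_n\otimes\,\omega\!\left((A_1B^\prime_1\cdots A_kB^\prime_k)^m\right)$, which is the required identity. There is no real obstacle in this argument since the work was done in Proposition \ref{p1}; the only point worth checking carefully is that the cyclic monotone independence hypothesis, being a statement about a fixed underlying set of entries, is unaffected by repeating the same matrices in the longer product. Consequently the moments of $A_1B_1\cdots A_kB_k$ and of $A_1B^\prime_1\cdots A_kB^\prime_k$ with respect to $\Tr_n\otimes\,\omega$ agree to all orders, which is exactly the content of the corollary.
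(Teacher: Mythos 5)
Your proof is correct and is essentially the paper's own argument: the paper justifies the corollary with the single observation that a power of $A_1B_1\cdots A_kB_k$ is again an alternating product of the same form, so Proposition \ref{p1} applies with $km$ factors. You have simply spelled out the relabeling and checked that the hypotheses (trace class entries and cyclic monotone independence, both being properties of the fixed underlying set of entries) survive repetition, which is a worthwhile but routine verification.
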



By applying Proposition \ref{Cor2.6}, we arrive at the following result.

\begin{teo} \label{maintheorem}Let $(\A,\tau)$ be a non-commutative probability space with a tracial weight $\omega$. Consider $A_p  = \left(a_{ij}^{(p)}\right)_{i,j}^n \in M_n(\operatorname{D}(\omega))$ and $B_q =  \left(b_{ij}^{(q)}\right)_{i,j}^n \in M_n(\A)$ for $p=1,\ldots,k$, $q=0,\ldots,k$. Assume that $\left(a_{ij}^{(p)}\right)_{i,j=1,p=1,\ldots,k}^n$ are trace class with respect to $\omega$ , and the pair
$$\left( \left\{a_{i,j}^{(p)}\,:\,p=1,\ldots,k,\, i,j=1,\ldots,n  \right\}, \left\{b_{i,j}^{(q)}\,:\,q=0,\ldots,k,\, i,j=1,\ldots,n  \right\} \right)  $$
is cyclically monotone independent with respect to $(\omega,\tau)$. Then
$$\EV(B_0A_1B_1\cdots A_kB_k) = \EV(A_1B_1^\prime \cdots A_k (B_kB_0)^\prime),$$
where $B_0^\prime,\ldots,B_k^\prime$ are defined as in Proposition \ref{p1}.
\end{teo}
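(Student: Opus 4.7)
The strategy is to establish the moment equality
$$(\Tr_n\otimes\omega)\bigl((B_0 A_1 B_1 \cdots A_k B_k)^m\bigr) = (\Tr_n\otimes\omega)\bigl((A_1 B_1' \cdots A_k (B_k B_0)')^m\bigr)$$
for every $m \geq 1$, and then to conclude by invoking Proposition \ref{Cor2.6}. Setting $X := B_0 A_1 B_1 \cdots A_k B_k$ and $Y := A_1 B_1' \cdots A_k (B_k B_0)'$, the work splits cleanly into a cyclic-rearrangement step and an application of the corollary to Proposition \ref{p1}.

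First, I would exploit the cyclic property of $\Tr_n \otimes \omega$: inside the $m$-fold product $X^m$, the leading $B_0$ of each factor can be transported to the tail of the preceding factor, where it attaches to the trailing $B_k$ to form the block $B_k B_0$. The resulting identity reads
$$(\Tr_n\otimes\omega)(X^m) = (\Tr_n\otimes\omega)\bigl((A_1 B_1 A_2 B_2 \cdots A_k (B_k B_0))^m\bigr).$$
Second, I would recognize the right-hand side as an instance of the corollary to Proposition \ref{p1}, where the $k$-th ``$B$''-matrix is $B_k B_0$ rather than $B_k$. The entries of $B_k B_0$ still lie in the $*$-algebra generated by the $b_{ij}^{(q)}$ (since that algebra is closed under multiplication), so the cyclic monotone independence hypothesis in the theorem still applies. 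The corollary then replaces each $B_p$ by the scalar matrix $B_p'$ and $B_k B_0$ by $(B_k B_0)'$ without altering the trace, yielding $(\Tr_n\otimes\omega)(X^m) = (\Tr_n\otimes\omega)(Y^m)$.

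To close the argument via Proposition \ref{Cor2.6}, I would verify that both $X$ and $Y$ are trace class. For $X$, each entry belongs to $I_\B(\A) \subset D(\omega)$ by the cyclic monotone independence hypothesis, so $X \in M_n(D(\omega))$; for $Y$, the entries are scalar combinations of products of entries of the $A_i$'s, hence lie in $D(\omega)$ and inherit the trace-class property from the assumption on the $A_i$'s. Proposition \ref{Cor2.6} then yields $\EV(X) = \EV(Y)$. The one delicate point I anticipate is the bookkeeping in the cyclic rearrangement step: one must confirm that cycling the $B_0$'s across the $m$ factors of $X$ produces exactly $m$ clean copies of the word ending in $B_k B_0$, rather than a recombination that straddles factor boundaries. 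This is really just the tracial property applied to the full word, but writing it out carefully is the one place where mistakes would be easy to make.
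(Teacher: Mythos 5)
Your proposal is correct and follows essentially the same route the paper takes (implicitly, since the paper leaves this proof to the reader): use traciality to absorb $B_0$ into $B_k$, apply the corollary to Proposition \ref{p1} to the powers of the resulting word, and conclude with Proposition \ref{Cor2.6}. Your added attention to the factor-boundary bookkeeping in the cyclic step and to the trace-class status of both sides is consistent with, and slightly more careful than, what the paper records.
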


The above theorem reduces the calculation of the eigenvalues of $B_0A_1B_1\cdots A_kB_k$ to the eigenvalues of $A_1B_1^\prime \cdots A_k (B_kB_0)^\prime$. Thus we only deal with matrices in the elements $a_i$'s and constant matrices $B_i^\prime$'s. As an application, we can give a proof of Theorem \ref{T314}.

\begin{proof}[Proof of Theorem \ref{T314}]
$1)\;$ Assume that $a_1,\ldots,a_k$ are selfadjoint and consider the matrix $B= (\tau(b_i^*b_j))_{i,j=1}^k$. Define the matrices
\begin{equation*}
A_1 = \begin{pmatrix}
a_1 & 0 &\cdots& 0\\
0 & a_2& \cdots &0\\
\vdots&\vdots &\ddots &\vdots\\
0 & 0&\cdots & a_k
\end{pmatrix}, 
\qquad B_ 0 =  \begin{pmatrix}
b_1 & b_2 &\cdots& b_k\\
0 & 0& \cdots &0\\
\vdots&\vdots &\ddots &\vdots\\
0 & 0&\cdots & 0
\end{pmatrix}.
\end{equation*}
and notice that $$B_0A_1B_0^* =   \begin{pmatrix}
\displaystyle \sum_{i=1}^k b_ia_ib_i^* &  0 &\cdots& 0\\
0 & 0& \cdots &0\\
\vdots&\vdots &\ddots &\vdots\\
0 & 0&\cdots & 0
\end{pmatrix},$$
which is a selfadjoint element in $M_k(\mathbb{C})\otimes \A$, with the same moments as $\sum_{i=1}^k b_ia_ib_i^*$. That is, for $m\geq1$, we have that 
$$\Tr\otimes \;\omega( (B_0A_1B_0^*)^m ) = \omega\left(\left( \sum_{i=1}^k b_ia_ib_i^*\right)^m \right).$$

We want to understand the eigenvalues of $B_0A_1B_0^*$. By traciality,  with respect to $\Tr_k\otimes\,\omega$, the moments of $B_0A_1B_0^*$, are the same as the moments of $A_1B_0^*B_0$ and thus, due to Corollary \ref{Cor2.6},  $B_0A_1B_0^*$ and $A_1B_0^*B_0$, must have the same eigenvalues. 

Finally, by Theorem \ref{maintheorem}, $A_1B_0^*B_0$ has the same eigenvalues as $AB$. Since $B$ is positive definite, the eigenvalues of $B_0A_1B_0^*$  are the same as the eigenvalues of $\sqrt{B}A_1\sqrt{B}$, as desired.

$2)\;$ Using the same idea of above, if we define
\begin{eqnarray*}
A_1 =   \begin{pmatrix}
a_1 &  a_2 &\cdots& a_k\\
0 & 0& \cdots &0\\
\vdots&\vdots &\ddots &\vdots\\
0 & 0&\cdots & 0
\end{pmatrix},\qquad
B_1 = \operatorname{diag}(b_1,\ldots,b_k),
\end{eqnarray*}
we have that $$A_1B_1A_1^* = \operatorname{diag}\left(\sum_{i=1}^k a_ib_ia_i^*,0,\ldots,0 \right)$$
which is selfadjoint. By Theorem \ref{maintheorem}, if $B_1^\prime = \operatorname{diag}(\tau(b_1),\tau(b_2),\ldots,\tau(b_k))$ we have that for $EV[A_1BA_1^*]$ any $m\geq1$
\begin{eqnarray*}
\omega\left( \left( 	\sum_{i=1}^k a_ib_ia_i^*\right)^m\right) &=& \Tr_k\otimes\;\omega\left( (A_1B_1A_1^*)^m\right)
\\ &=& \Tr_k\otimes\;\omega\left((A_1B_1^\prime A_1^*)^m \right)
\\ &=& \omega\left(\left(\sum_{i=1}^k a_i\tau(b_i)a_i^* \right)^m \right).
\end{eqnarray*}
Hence
$$\EV\left( \sum_{i=1}^k a_ib_ia_i^* \right) = \EV\left( \sum_{i=1}^k \tau(b_i)a_ia_i^*\right).$$
$3)\;$ Assume that $a,b$ are selfadjoint. Define the matrices
$$B_0 = \begin{pmatrix}
1&b\\ 0 & 0
\end{pmatrix},\quad A_1 = \begin{pmatrix}
a&0\\0&a
\end{pmatrix},\quad B_1 = \begin{pmatrix}
b&0\\1&0
\end{pmatrix}.$$
Then we have that $$B_0A_1B_1 = \begin{pmatrix}
ab+ba&0\\0&0
\end{pmatrix}$$which is selfadjoint. Then by Proposition \ref{p1}, for any $m\geq1$ we have that
\begin{eqnarray*}
\omega((ab+ba)^m) &=& \Tr_2\otimes\;\omega((B_0A_1B_1)^m)
\\ &=& \Tr_2\otimes\;\omega ((A_1B_1B_0)^m)
\\ &=& \Tr_2\otimes \;\omega \left( \left( \begin{pmatrix}
a&0\\0&a
\end{pmatrix} \begin{pmatrix}
\tau(b)&\tau(b^2)\\1&\tau(b)
\end{pmatrix}\right)^m\right)
\\ &=& \Tr_2\otimes\;\omega \left( a^m \begin{pmatrix}
\tau(b)&\tau(b^2)\\1&\tau(b)
\end{pmatrix}^m \right),
\end{eqnarray*}
If we have a matrix $ \begin{pmatrix}
x&y\\1&x
\end{pmatrix}$, by diagonalizing we have that $$\Tr_2\left( \begin{pmatrix}
x&y\\1&x
\end{pmatrix}^m\right) = (x+\sqrt{y})^m + (x-\sqrt{y})^m.$$ Then
\begin{eqnarray*}
\omega((ab+ba)^m) &=&\Tr_2\otimes\omega \left( a^m \begin{pmatrix}
\tau(b)&\tau(b^2)\\1&\tau(b)
\end{pmatrix}^m \right) \\ &=& \Tr_2\otimes \omega \left( a^m \begin{pmatrix}
p^m&0\\0&q^m
\end{pmatrix} \right) \\ &=& \omega \left( a^m\left(\tau(b)+\sqrt{\tau(b^2)}\right)^m + a^m\left(\tau(b)-\sqrt{\tau(b)^2}\right)^m\right)\\ &=& \omega\left((pa)^m +(qa)^m\right),
\end{eqnarray*}

where $p$ and $q$ are defined as in the statement of Theorem \ref{T314}. We conclude that
 $$\EV(ab+ba) = p\EV(a) \sqcup q\EV(a).$$
 $4)\;$ Proceeding in an analogous way of 3), defining the matrices
 $$B_0 = \begin{pmatrix}
 \operatorname{i}& -\operatorname{i}b\\ 0 &0
\end{pmatrix},\quad A_1= \begin{pmatrix}
a&0\\0&a
\end{pmatrix},\quad B_1= \begin{pmatrix}
b&0\\1&0
\end{pmatrix}$$
we have that
$$B_0A_1B_1 = \begin{pmatrix}
\operatorname{i}(ab-ba)&0\\0&0
\end{pmatrix}$$which is selfadjoint. Then for any $m\geq1$ we have that
\begin{eqnarray*}
\omega(((\operatorname{i}(ab+ba))^m) &=& \Tr_2\otimes\;\omega((B_0A_1B_1)^m)
\\ &=& \Tr_2\otimes\;\omega ((A_1B_1B_0)^m)
\\ &=& \Tr_2\otimes \;\omega \left( \left( \begin{pmatrix}
a&0\\0&a
\end{pmatrix} \begin{pmatrix}
\operatorname{i}\tau(b)&-\operatorname{i}\tau(b^2)\\\operatorname{i}&-\operatorname{i}\tau(b)
\end{pmatrix}\right)^m\right)
\\ &=& \Tr_2\otimes\;\omega \left( a^m\begin{pmatrix}
\operatorname{i}\tau(b)&-\operatorname{i}\tau(b^2)\\\operatorname{i}&-\operatorname{i}\tau(b)
\end{pmatrix}^m \right),
\end{eqnarray*}
By diagonalizing, we have that
$$\Tr_2\left( \begin{pmatrix}
x&y\\1&-x
\end{pmatrix}^m\right) = \left(\sqrt{y+x^2}\right)^m + \left(-\sqrt{y+x^2}\right)^m.$$
Hence
\begin{eqnarray*}
\omega((\operatorname{i}(ab-ba))^m) &=&  \Tr_2\otimes\;\omega \left( a^m\begin{pmatrix}
\operatorname{i}\tau(b)&-\operatorname{i}\tau(b^2)\\\operatorname{i}&-\operatorname{i}\tau(b)
\end{pmatrix}^m \right)\\ &=&  \Tr_2\otimes\;\omega \left( a^m\begin{pmatrix}
r^m&0\\0&(-r)^m
\end{pmatrix} \right) \\  &=& \omega \left(a^m \left(\sqrt{\tau(b^2)- \tau(b)^2}\right)^m + a^m  \left(-\sqrt{\tau(b^2)- \tau(b)^2}\right)^m\right) \\ &=& \omega\left( (ra)^m + (-ra)^m\right),
\end{eqnarray*}
where $r$ is defined as in the statement of Theorem \ref{T314}. We conclude that
$$\EV(\operatorname{i}(ab-ba)) = r\EV(a)\sqcup (-r)\EV(a).$$
\end{proof}

Now, in parts 3) and 4) of Theorem \ref{T314} we only one consider one element $a$. In this cases, it was possible to obtain explicit formulas from the fact that $a$ commutes with itself and then we can compute the trace of powers of a matrix by adding powers of the eigenvalues. The result can be expressed as the disjoint union of the eigenvalues of $\lambda_i a$, where the $\lambda_i$'s are the eigenvalues of $\operatorname{id}\otimes \tau \;(B)$. One can generalize this as in the following result.

\begin{prop}
\label{p6}
Let $(\A,\omega,\tau)$ be a non-commutative probability space with a tracial weight $\omega$. Consider $a\in \Du$ and $b_1,\ldots,b_k,c_1,\ldots,c_k \in \A$ such that $a$ is trace class with respect to $\omega$ and $(\{a\},\{b_1,c_1,\ldots,b_k,c_k\})$ is cyclically monotone with respect to $(\omega,\tau)$. Assume that $a,b_1,c_1,\ldots,b_k,c_k$ are selfadjoint and $B^\prime = (\tau(c_ib_j))_{i,j=1}^k$. If $\lambda_1,\ldots,\lambda_k$ are the $k$ eigenvalues of $B^\prime$ counting multiplicity, then
\begin{equation}
\EV\left(\sum_{i=1}^k b_iac_i \right) = \bigsqcup_{i=1}^k \EV(\lambda_i a).
\end{equation}
\end{prop}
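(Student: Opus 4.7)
My plan is to adapt the $2\times 2$ matrix construction used for parts~(3) and~(4) of Theorem~\ref{T314}, enlarging the outer ``row'' and ``column'' matrices so that they record $k$ entries instead of two. Concretely, I would set
$$B_0 = \begin{pmatrix} b_1 & b_2 & \cdots & b_k \\ 0 & 0 & \cdots & 0 \\ \vdots & \vdots & \ddots & \vdots \\ 0 & 0 & \cdots & 0 \end{pmatrix},\quad A_1 = \operatorname{diag}(a,\ldots,a),\quad B_1 = \begin{pmatrix} c_1 & 0 & \cdots & 0 \\ c_2 & 0 & \cdots & 0 \\ \vdots & \vdots & \ddots & \vdots \\ c_k & 0 & \cdots & 0 \end{pmatrix},$$
so that a direct computation gives $B_0 A_1 B_1 = \operatorname{diag}\bigl(\sum_{i=1}^{k} b_i a c_i,\,0,\ldots,0\bigr)$. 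In particular the $\omega$-moments of $\sum_i b_i a c_i$ coincide with the $\Tr_k\otimes\omega$-moments of $B_0 A_1 B_1$.

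Next I would apply Theorem~\ref{maintheorem}: by traciality of $\Tr_k\otimes\omega$, the spectrum of $B_0 A_1 B_1$ agrees with that of $A_1 (B_1 B_0)'$. Computing $(B_1 B_0)_{ij}=c_i b_j$ shows $(B_1 B_0)' = (\tau(c_i b_j))_{i,j=1}^{k} = B'$, a constant matrix in $M_k(\mathbb{C})$. The crucial observation is that $A_1 = a\otimes I_k$ commutes with $1_\A\otimes B'$, so
$$(A_1 B')^{m} \;=\; a^{m}\otimes (B')^{m} \qquad\text{for every } m\geq 1.$$
Taking $\Tr_k\otimes\omega$ and using $\Tr_k((B')^{m}) = \sum_{i=1}^{k}\lambda_i^{m}$ I obtain
$$\omega\!\left(\Bigl(\sum_{i=1}^{k} b_i a c_i\Bigr)^{\!m}\right) \;=\; \omega(a^{m})\sum_{i=1}^{k}\lambda_i^{m} \;=\; \sum_{i=1}^{k}\omega\bigl((\lambda_i a)^{m}\bigr),$$
for every $m\geq 1$, and Proposition~\ref{Cor2.6} then delivers the claimed equality of eigenvalue multisets.

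The main point requiring care is that Proposition~\ref{Cor2.6} is phrased for selfadjoint trace-class elements with real spectrum. From selfadjointness of the $b_i,c_i$ together with traciality and selfadjointness of $\tau$ one gets $\overline{\tau(c_i b_j)} = \tau(b_j c_i) = \tau(c_i b_j)$, so at least $B'\in M_k(\mathbb{R})$. When $\sum_i b_i a c_i$ is itself selfadjoint (as in parts~(3) and~(4) of Theorem~\ref{T314}, which are recovered by specialising $k=2$) the eigenvalues $\lambda_i$ are real and the right-hand side $\bigsqcup_i \EV(\lambda_i a)$ is the usual multiset of real eigenvalues; the computational core is the simple $a\otimes I_k$ commutation argument, which works identically for any $k$.
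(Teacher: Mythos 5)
Your construction is exactly the paper's: the same row, column, and diagonal matrices (the paper calls them $B$, $C$, $A$), the same reduction from $BAC$ to $ACB$ and then to $a\otimes B'$ via traciality and the replacement of the $b$'s and $c$'s by their $\tau$-values, and the same power-sum identity $\Tr_k((B')^m)=\sum_i\lambda_i^m$ followed by Proposition~\ref{Cor2.6}. Your additional remarks --- the explicit commutation $(A_1B')^m=a^m\otimes(B')^m$ and the observation that $B'$ has real entries so that Proposition~\ref{Cor2.6} applies --- only spell out steps the paper leaves implicit, so the argument is correct and essentially identical.
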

\begin{proof}
As in the proof of Theorem \ref{T314}, define the matrices in $M_k(\A)$
\begin{equation*}
B = \begin{pmatrix}
b_1 & b_2 &\cdots& b_k\\
0 & 0& \cdots &0\\
\vdots&\vdots &\ddots &\vdots\\
0 & 0&\cdots & 0
\end{pmatrix}, 
\quad C =  \begin{pmatrix}
c_1 & 0 &\cdots& 0\\
c_2 & 0& \cdots &0\\
\vdots&\vdots &\ddots &\vdots\\
c_k & 0&\cdots & 0
\end{pmatrix},\quad A = \operatorname{diag}(a,\ldots,a).
\end{equation*}
Then $BAC = \operatorname{diag}\left(\sum_{i=1}^k b_ia c_k,0,\ldots,0 \right)$ which is selfadjoint. Proceeding as in the latter proof we have that for $m\geq1$
\begin{eqnarray*}
\omega\left( \left(\sum_{i=1}^k b_iac_i \right)^m\right) &=& \Tr_k\otimes\,\omega ((BAC)^m)
\\ &=& \Tr_k\otimes\, \omega ((ACB)^m)
\\ &=& \Tr_k\otimes\, \omega \left(  (aB^\prime)^m\right)
\\ &=& \omega\left(\sum_{i=1}^k (\lambda_ia)^m \right),
\end{eqnarray*}
where we use that $\Tr_k(X^m)$ is the sum of the $m$-powers of the eigenvalues of $X$. We conclude then that
\begin{equation*}
\EV\left(\sum_{i=1}^k b_iac_i \right) = \bigsqcup_{i=1}^k \lambda_i\EV(a).
\end{equation*}
\end{proof}

\subsection{Conjugation Respects Cyclic Monotone Independence}

One can asks if is possible to form new cyclic monotone elements from given ones. An answer for this question is provided in the following proposition. Combined with Theorem \ref{T314}, it also allows to get a formula for a new kind of polynomials.

\begin{prop}
\label{p10}
Let $(\A,\omega,\tau) $ be a non-commutative probability space with  tracial weight $\omega$. Let $a_1,\ldots,a_k\in \Du$ and $b_1,\ldots,b_k,c_1,\ldots,c_k \in \A$. If $(\{a_1,\ldots,a_k\},\{b_1,c_1,\ldots,b_k,c_k\})$ is cyclically monotone, then $(\{a_1c_1a_1^*,\ldots,a_kc_ka_k^*\},\{b_1,c_1,\ldots,b_k,c_k\})$ is cyclically monotone.
\end{prop}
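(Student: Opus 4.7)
The plan is to verify the cyclic monotone factorization identity for the pair $(\A',\B)$, where $\A' := \operatorname{alg}\{a_1c_1a_1^*,\ldots,a_kc_ka_k^*\}$ and $\B := \operatorname{alg}\{1,b_1,c_1,\ldots,b_k,c_k\}$, by reducing to the identity already assumed for $(\A,\B)$ with $\A := \operatorname{alg}\{a_1,\ldots,a_k\}$. The required domain condition $\operatorname{I}_{\B}(\A')\subset \Du$ is free: each generator satisfies $a_ic_ia_i^* = 1\cdot a_i \cdot c_i \cdot a_i^* \cdot 1 \in \operatorname{I}_{\B}(\A)$, and $\operatorname{I}_{\B}(\A)$ is closed under products, so $\A' \subset \operatorname{I}_\B(\A)$ and hence $\operatorname{I}_\B(\A') \subset \operatorname{I}_\B(\A) \subset \Du$.

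By multilinearity one reduces to the case of monomial inputs $x_p = \prod_{q=1}^{s_p} a_{i_q^{(p)}} c_{i_q^{(p)}} a_{i_q^{(p)}}^*$ in the generators of $\A'$ and arbitrary $y_p \in \B$. The key observation is that, read letter by letter, $x_1y_1\cdots x_ny_n$ is already almost a strictly alternating $\A$-$\B$ word: each triple $a_ic_ia_i^*$ contributes the pattern $ABA$, and the only consecutive $\A$-letters occur inside some $x_p$ at joints of the form $\cdots a^*\!\cdot a \cdots$, which one collapses into a single element of $\A$; at the boundaries the letter $y_p\in\B$ already sits between $x_p$ and $x_{p+1}$, so no collapse is needed there. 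After grouping, the word has equal numbers of $\A$- and $\B$-letters, starts with an $\A$-letter, and ends with $y_n$, so the cyclic monotone hypothesis for $(\A,\B)$ applies and produces
$$\omega(x_1y_1\cdots x_ny_n) \;=\; \omega(\tilde X_1 \cdots \tilde X_n) \cdot \prod_{p=1}^n \prod_{q=1}^{s_p}\tau(c_{i_q^{(p)}}) \cdot \prod_{p=1}^n \tau(y_p),$$
where $\tilde X_p := a_{i_1^{(p)}} a_{i_1^{(p)}}^* \cdots a_{i_{s_p}^{(p)}} a_{i_{s_p}^{(p)}}^*$ ungroups the $\A$-letters belonging to $x_p$.

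Running the same grouping trick on $\omega(x_1 x_2 \cdots x_n)$, after appending a single $1\in \B$ at the end to balance letter counts (harmless because $\tau(1)=1$), gives $\omega(x_1\cdots x_n) = \omega(\tilde X_1 \cdots \tilde X_n)\cdot \prod_{p,q}\tau(c_{i_q^{(p)}})$. Substituting back delivers $\omega(x_1 y_1 \cdots x_n y_n) = \omega(x_1 \cdots x_n)\, \tau(y_1)\cdots\tau(y_n)$, which is the cyclic monotone identity for $(\A',\B)$. The only real obstacle is organizational: one must confirm that the collapsing of adjacent $\A$-letters and the trivial $\B$-insertions genuinely yield a bona fide alternating word to which the hypothesis for $(\A,\B)$ applies, but once the internal $ABA$ structure of each triple $a_ic_ia_i^*$ is recognized this verification is routine.
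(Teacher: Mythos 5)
Your proof is correct and follows essentially the same route as the paper's: expand each generator $a_ic_ia_i^*$ into its alternating $\A$--$\B$--$\A$ pattern, group adjacent $\A$-letters, apply the cyclic monotone hypothesis for $(\A,\B)$ to both $\omega(x_1y_1\cdots x_ny_n)$ and $\omega(x_1\cdots x_n)$, and compare the two resulting expressions. The paper writes out only the case $n=2$ ``for notational convenience,'' while you treat general $n$ and additionally record the domain condition $\operatorname{I}_{\B}(\A')\subset\Du$; these are presentational differences only.
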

\begin{proof}
We have to show that if $x_1,\ldots,x_n \in \operatorname{alg}(\{a_1c_1a_1^*,\ldots,a_kc_ka_k^*\})$ and $y_1,\ldots,y_n \in \operatorname{alg}(\{1,b_1,c_1,\ldots,b_k,c_k\})$, then 
$$\omega(x_1y_1\cdots x_ny_n) = \omega(x_1\cdots x_n)\tau(y_1)\cdots \tau(y_n).$$
For notational convenience, we will prove the result for the case $n=2$. The general case is done in a similar way. Consider the elements
\begin{eqnarray*}
x_1 &=& (a_{i_1}c_{i_1}a_{i_1}^*)\cdots(a_{i_r}c_{i_r}a_{i_r}^*),
\\ x_2 &=& (a_{j_1}c_{j_1}a_{j_1}^*)\cdots(a_{j_s}c_{j_s}a_{j_s}^*),
\end{eqnarray*}
for some $1\leq i_1,\ldots,i_r,j_1,\ldots,j_s\leq k$. If $y_1,y_2\in \operatorname{alg}(\{1,b_1,c_1,\ldots,b_k,c_k\})$, by cyclic monotone independence, we have that
\begin{eqnarray*}
\omega(x_iy_1x_2y_2) &=& \omega \left( (a_{i_1}c_{i_1}a_{i_1}^*)\cdots(a_{i_r}c_{i_r}a_{i_r}^*)y_1  (a_{j_1}c_{j_1}a_{j_1}^*)\cdots(a_{j_s}c_{j_s}a_{j_s}^*)y_2\right)
\\ &=& \omega(a_{i_1}a_{i_1}^*\cdots a_{i_r}a_{i_r}^*a_{j_1}a_{j_1}^*\cdots a_{j_s}a_{j_s}^*)\tau(c_{i_1})\cdots \tau(c_{i_r})\tau(y_1)\tau(c_{j_1})\cdots\tau(c_{j_s})\tau(y_2)
\end{eqnarray*}
On the other hand
\begin{eqnarray*}
\omega(x_1x_2)\tau(y_1)\tau(y_2) &=& \omega\left((a_{i_1}c_{i_1}a_{i_1}^*)\cdots(a_{i_r}c_{i_r}a_{i_r}^*)(a_{j_1}c_{j_1}a_{j_1}^*)\cdots(a_{j_s}c_{j_s}a_{j_s}^*) \right)\tau(y_1)\tau(y_2)
\\ &=& \omega(a_{i_1}a_{i_1}^*\cdots a_{i_r}a_{i_r}^*a_{j_1}a_{j_1}^*\cdots a_{j_s}a_{j_s}^*)\left(\prod_{\ell=1}^r\tau(c_{i_\ell})\right)\left(\prod_{\ell=1}^s \tau(c_{j_\ell})\right)\tau(y_1)\tau(y_2)
\end{eqnarray*}
We finish the proof by comparing the above equations.
\end{proof}

As we stated before, last proposition allows to get a formula for a generalization of the first part of Theorem \ref{T314}.

\begin{cor}
Let $(\A,\tau)$ be a non-commutative probability space with tracial weight $\omega$. Consider $a_1,\ldots,a_k\in \Du$ and $b_1,\ldots,b_k,c_1,\ldots,c_k \in\A$  such that $(a_1,\ldots,a_k)$ are trace class with respect to $\omega$ and  $(\{a_1,\ldots,a_k \},\{b_1,c_1,\ldots,b_k,c_k\})$ is cyclically monotone with respect to $(\omega,\tau)$. If $c_1,\ldots,c_k$ are selfadjoint and $B = ((\tau(b_i^*b_j)))_{i,j=1}^k\in M_k(\mathbb{C})$, then
\begin{eqnarray*}
\EV \left(\sum_{i=1}^k b_ia_ic_ia_i^*b_i^* \right) &=& \EV\left(\sqrt{B}\operatorname{diag}(d_1,\ldots,d_k)\sqrt{B}\right) \\ &=& \EV\left(\sqrt{B}\operatorname{diag}(\tau(c_1)a_1a_1^*,\ldots,\tau(c_k)a_ka_k^*)\sqrt{B} \right),
\end{eqnarray*}
where the elements $d_i = a_ic_ia_i^*\in \Du$ for $i=1,\ldots,k$, and $\sqrt{B}\operatorname{diag}(d_1,\ldots,d_k)\sqrt{B}$ and $\sqrt{B}\operatorname{diag}(\tau(c_1)a_1a_1^*,\ldots,\tau(c_k)a_ka_k^*)\sqrt{B}$ are selfadjoint elements in $(M_k(\mathbb{C})\otimes \A,\Tr_k\otimes\;\omega)$.
\end{cor}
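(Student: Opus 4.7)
The plan is to split the statement into its two equalities and handle each one by a direct appeal to the tools already developed. For the first equality, the idea is to set $d_i := a_ic_ia_i^*$ and observe two things: each $d_i$ is selfadjoint because $c_i$ is, and by Proposition \ref{p10} the pair $(\{d_1,\ldots,d_k\},\{b_1,c_1,\ldots,b_k,c_k\})$ is cyclically monotone with respect to $(\omega,\tau)$. After checking that $(d_1,\ldots,d_k)$ is itself trace class with respect to $\omega$ (which follows because, under any Hilbert space realisation $(x_1,\ldots,x_k)$ of $(a_1,\ldots,a_k)$ as trace class operators, the joint distribution of $(d_i)$ coincides with that of the trace class operators $(\tau(c_i)\,x_ix_i^*)$ by the cyclic monotone expansion), part (1) of Theorem \ref{T314} applied to $\{d_i\}$ and $\{b_i\}$ yields
\[
\EV\!\left(\sum_{i=1}^k b_i d_i b_i^*\right) = \EV\!\left(\sqrt{B}\operatorname{diag}(d_1,\ldots,d_k)\sqrt{B}\right),
\]
which is exactly the first claimed identity.

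For the second equality, the strategy is to verify via Proposition \ref{Cor2.6} that the two selfadjoint matrices $\sqrt{B}\operatorname{diag}(d_1,\ldots,d_k)\sqrt{B}$ and $\sqrt{B}\operatorname{diag}(\tau(c_1)a_1a_1^*,\ldots,\tau(c_k)a_ka_k^*)\sqrt{B}$ have the same moments with respect to $\Tr_k\otimes\omega$. By the cyclic property of $\Tr_k\otimes\omega$ one first rewrites the $m$-th moment of $\sqrt{B}D\sqrt{B}$ (with $D=\operatorname{diag}(d_i)$) as $\Tr_k\otimes\omega((DB)^m)$, and analogously for $D'=\operatorname{diag}(\tau(c_i)a_ia_i^*)$. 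Expanding the matrix product and using cyclic monotone independence of $\{a_i\}$ and $\{c_i\}$, every summand of the form $\omega(d_{i_1}\cdots d_{i_m}) = \omega(a_{i_1}c_{i_1}a_{i_1}^*\cdots a_{i_m}c_{i_m}a_{i_m}^*)$ simplifies to $\omega(a_{i_1}a_{i_1}^*\cdots a_{i_m}a_{i_m}^*)\prod_\ell \tau(c_{i_\ell})$, which is exactly the corresponding summand in the expansion of $\Tr_k\otimes\omega((D'B)^m)$. Hence the moments agree, and Proposition \ref{Cor2.6} closes the argument.

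I do not expect a serious obstacle here, as both equalities reduce mechanically to previously established tools. The only point needing care is the joint trace class condition for $(d_1,\ldots,d_k)$ required to invoke Theorem \ref{T314}(1); this is where the interplay between Proposition \ref{p10} and the realisation of $(a_1,\ldots,a_k)$ as genuine trace class operators is used, and is handled precisely as indicated above. Everything else is bookkeeping with cyclicity and the monotone factorisation identity.
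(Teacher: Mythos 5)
Your proof is correct and follows essentially the same route as the paper: Proposition \ref{p10} plus Theorem \ref{T314}(1) for the first equality, and a moment comparison (your entrywise expansion is exactly what the paper's appeal to Proposition \ref{p1} amounts to) for the second. You also supply a detail the paper glosses over, namely the verification that $(d_1,\ldots,d_k)$ is trace class, which is a welcome addition rather than a deviation.
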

\begin{proof}
By Proposition \ref{p10}, we have that $(\{d_1,\ldots,d_k\},\{b_1,\ldots,b_k\})$ is cyclically monotone. Since $c_i$ es selfadjoint, then $d_i$ is also selfadjoint, for $i=1,\ldots,k$. We obtain the first equality by applying Theorem \ref{T314}. The second equality follows from using Proposition \ref{p1} and the same ideas that in the proof of Theorem \ref{T314}.
\end{proof}

\subsection{General Polynomials}

In principle, by using the proof of Theorem \ref{T314} could be applied to any selfadjoint $*$-polynomial that can be written as the entry (1,1) of a product of matrices $A_1B_1\cdots A_kB_k$ as in Proposition \ref{p1}, and the rest of the entries are zero. However, the same trick is no longer possible in some polynomials where the number of elements $a_i$ is not the same on each monomial. For instance, consider the polynomial $a +babab$, where $(\{a\},\{b\})$ is cyclically monotone. If we would like to write this polynomial as a product of matrices as in the above proofs, we would have to do the following factorization
\begin{equation*}
\begin{pmatrix}
1 & b\\0 & 0
\end{pmatrix}
\begin{pmatrix}
a & 0\\0 & a
\end{pmatrix}
\begin{pmatrix}
1 & 0\\0 & b
\end{pmatrix}
\begin{pmatrix}
1 & 0\\0 & a
\end{pmatrix}
\begin{pmatrix}
1 & 0\\b & 0
\end{pmatrix} =  \begin{pmatrix}
a+ babab & 0\\0 & 0
\end{pmatrix}.
\end{equation*}
Now, if we look at the fourth matrix in the product, we notice that 1 is in the entry (1,1), then we would need that $1\in \mathcal{A}=\operatorname{alg}(a)$, so that this matrix belongs to $M_2(\mathcal{A})$. However, if we have a pair $(\A,\B)$ cyclically monotone in $\C$ such that $\A$ are trace class on a infinite dimensional Hilbert space, then $1\not\in \A$ since the identity is not compact (and hence it is not trace class).

Thus, we have some restrictions on when can we use the above procedure. In order to consider general polynomials we will use another observation: in many cases we can find simpler polynomials which have the same eigenvalues. in order to do this we need to  consider the joint distributions. More precisely, we have the following.

\begin{defi}
\begin{itemize}
\item Let $(\C,\omega)$ be a non-commutative measure space. Consider $a_1,\ldots,a_k\in \Du$, $b_1,\ldots,b_\ell \in \C$, and the subalgebras $\A = \operatorname{alg}(a_1,\ldots,a_k)$ and $\B = \operatorname{alg}(1_\C,b_1,\ldots,b_\ell)$. If for $n\geq1$, $x_1,\ldots,x_n\in \A$ and $y_0,\ldots,y_n\in \B$, we have that $y_0x_1y_1\cdots x_ny_n\in \Du$, we define the \textit{joint distribution} of $((a_1,\ldots,a_k),(b_1,\ldots,b_\ell))$ as the set of mixed moments
$$\{\omega(y_0x_1y_1\cdots x_ny_n)\;:\;n\in\mathbb{N},\,x_1,\ldots,x_n\in \A,\,y_0,\ldots,y_n\in \B\}.$$
\item Let $(\C^\prime,\omega)$ and $(\C^{\prime\prime},\xi)$ be non-commutative measure space. Consider the elements $a_1,\ldots,a_k\in \Du$, $b_1,\ldots,b_\ell \in \C$, $c_1,\ldots,c_k\in \operatorname{D}(\xi)$, and $d_1,\ldots,d_\ell \in \mathcal{D}$. Define $\A = \operatorname{alg}(a_1,\ldots,a_k)$, $\B = \operatorname{alg}(1_{\C^\prime},b_1,\ldots,b_\ell)$, $\C = \operatorname{alg}(c_1,\ldots,c_k)$, and $\mathcal{D} = \operatorname{alg}(1_{\C^{\prime\prime}},d_1,\ldots,d_\ell)$.  We say that $((a_1,\ldots,a_k),(b_1,\ldots,b_\ell))$ and $((c_1,\ldots,c_k),(d_1,\ldots,d_\ell))$ \textit{has the same joint distribution} if
$$\omega(y_0x_1y_1\cdots x_ny_n) = \xi(\Phi(y_0x_1y_1\cdots x_ny_n))$$for any $n\geq1$, $x_1,\ldots,x_n\in \operatorname{alg}(a_1,\ldots,a_k)$, and $y_0,\ldots,y_n\in \operatorname{alg}(1_\C,b_1,\ldots,b_\ell)$, and $\Phi : \operatorname{I}_\A(\B) \to \operatorname{I}_\C(\mathcal{D})$ is a unital algebra isomorphism such that $\Phi(a_i) = c_i$ for any $i=1,\ldots,k$, and $\Phi(b_j)=d_j$, for any $j=1,\ldots,\ell$.
\end{itemize}
\end{defi}

 Now we can pursue the above idea of replacing the elements $b$'s by its mean, $\tau(b)$, then reduce to a polynomial as the ones treated in Theorem \ref{T314} in order to compute the eigenvalues.

\begin{teo}\label{chido}
Let $(\A,\omega,\tau)$ be a non-commutative probability space with tracial weight $\omega$. Consider $a,c,a_1,\ldots,a_k\in \Du$ and $b,b_1,\ldots,b_l \in\A$  such that $(a,a_1,\ldots,a_k)$ are trace class with respect to $\omega$ and the pair   $(\{a,c,a_1,\ldots,a_k \},\{b,b_1,\ldots,b_k \})$ is cyclically monotone with respect to $(\omega,\tau)$. Then $((abc,a_1,\ldots,a_k),( b,b_1,\ldots,b_l))$ and $((\tau(b)ac,a_1,\ldots,a_k),( b,b_1,\ldots,b_l))$ have the same joint distribution. In particular, $(\{abc,a_1,\ldots,a_k\},\{b,b_1,\ldots,b_\ell\})$ is cyclically monotone.
\end{teo}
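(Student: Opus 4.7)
The plan is to expand each occurrence of $abc$ literally as $a\cdot b\cdot c$ and reduce every joint moment to a single application of the cyclic monotone factorization already available for the pair $(\{a,c,a_1,\ldots,a_k\},\{b,b_1,\ldots,b_\ell\})$. By multilinearity, the identity of joint distributions only needs to be checked on monomials: fix a product $y_0 z_1 y_1 \cdots z_n y_n$ where each $z_i$ is a word in $\{abc,a_1,\ldots,a_k\}$ and each $y_i$ a word in $\{1,b,b_1,\ldots,b_\ell\}$, and denote by $M$ the number of occurrences of $abc$ among $z_1,\ldots,z_n$.

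For the left-hand side, rewrite each $abc$ as $a\cdot b\cdot c$ and group consecutive like-type factors into blocks in $\A_0 := \operatorname{alg}(a,c,a_1,\ldots,a_k)$ and $\B_0 := \operatorname{alg}(1,b,b_1,\ldots,b_\ell)$. Using traciality of $\omega$ to rotate $y_0$ to the end, the result is an alternating word $\alpha_1 \beta_1 \cdots \alpha_N \beta_N$ with $\alpha_j \in \A_0$, $\beta_j \in \B_0$. Cyclic monotone independence of the given pair then yields
\[
\omega(y_0 z_1 y_1 \cdots z_n y_n) = \omega(\alpha_1 \alpha_2 \cdots \alpha_N)\,\tau(\beta_1)\cdots\tau(\beta_N).
\]
The key combinatorial observation is that every $b$ produced by expanding some $abc$ forms a singleton $\B_0$-block (it is flanked by the associated $a$ and $c$, which get absorbed into adjacent $\A_0$-blocks), contributing exactly one factor $\tau(b)$. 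Writing $\Phi(abc) := ac$ and $\Phi(a_p) := a_p$, this gives
\[
\omega(y_0 z_1 y_1 \cdots z_n y_n) = \tau(b)^{M}\,\omega\bigl(\Phi(z_1)\cdots\Phi(z_n)\bigr)\,\tau(y_1)\cdots\tau(y_{n-1})\,\tau(y_n y_0).
\]

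For the right-hand side, the scalars $\tau(b)$ from the replacement $abc \mapsto \tau(b)\,ac$ pull out of the moment, leaving $\tau(b)^{M}\,\omega(y_0 \Phi(z_1) y_1 \cdots \Phi(z_n) y_n)$. Since each $\Phi(z_i) \in \A_0$, a direct application of cyclic monotone independence of the original pair (after the same rotation) produces exactly the same expression as above. The two sides therefore agree, proving the equality of joint distributions. The ``in particular'' clause then follows at once: the joint moments of $(\{abc, a_1, \ldots, a_k\}, \{b, b_1, \ldots, b_\ell\})$ coincide with those of $(\{\tau(b)\,ac, a_1, \ldots, a_k\}, \{b, b_1, \ldots, b_\ell\})$, and the latter satisfies the cyclic monotone factorization trivially since $\operatorname{alg}(\tau(b)\,ac, a_1, \ldots, a_k) \subset \A_0$.

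The main obstacle is the bookkeeping inside the expansion, namely checking that after the substitution $abc \mapsto a\cdot b\cdot c$ and the subsequent grouping of adjacent like-type factors the resulting sequence is genuinely alternating, that the product of its $\A_0$-blocks equals $\Phi(z_1)\cdots\Phi(z_n)$, and that the $\tau$-factors on the left split off precisely one $\tau(b)$ per occurrence of $abc$. Once this identification is in place, both sides of the claimed equality reduce to the same product of an $\omega$-moment on $\A_0$ with the same collection of $\tau$-factors, and the theorem follows in a single step.
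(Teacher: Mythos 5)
Your proof is correct and rests on the same idea as the paper's: expand each occurrence of $abc$ literally, apply the cyclic monotone factorization of the original pair, and observe that every inserted $b$ sits as a singleton $\B$-block (flanked by $a$ and $c$) and therefore splits off exactly one factor $\tau(b)$, matching the scalar pulled out on the other side. If anything, your version is more complete: the paper only verifies the representative moment $\omega\left((abc)ba_1b_1\cdots a_kb_k\right)$ and declares the general case analogous, whereas you carry out the block-decomposition bookkeeping for an arbitrary mixed monomial.
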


\begin{proof}
It is enough to prove the case when $k=\ell$ and 
\begin{equation}
\label{eq6}
    \omega\left((abc)ba_1b_1\cdots a_kb_k \right) = \omega\left( (\tau(b)ac)ba_1b_1\cdots a_kb_k \right).
\end{equation}
Using the cyclic monotone independence formula we easily get that
\begin{eqnarray*}
\omega\left((abc)ba_1b_1\cdots a_kb_k \right)  &=& \omega(aca_1\cdots a_k)\tau(b)^2\tau(b_1)\cdots \tau(b_k)
\\ &=& \omega((\tau(b)ac)a_1\cdots a_k)\tau(b)\tau(b_1)\cdots \tau(b_k)
\\ &=& \omega\left( (\tau(b)ac)ba_1b_1\cdots a_kb_k \right).
\end{eqnarray*}
The cyclic monotone independence of the pair $(\{abc,a_1,\ldots,a_k\},\{b,b_1,\ldots,b_k\})$ follows from the first equality above and and the fact that 
\begin{eqnarray*}
\omega(aca_1\cdots a_k)\tau(b) &=& \omega(aca_1\cdots a_k)\tau(b)\tau(1)^{k+1}
\\ &=& \omega((abc)\cdot 1_\A\cdot a_1 \cdot 1_\A \cdots a_k \cdot 1_\A)
\\ &=& \omega((abc)a_1\cdots a_k).
\end{eqnarray*}

\end{proof}

The basic idea of the above theorem is that if $(\A,\B)$ is cyclic monotone, whenever an element $b\in \B$ is multiplied by the left and the right by elements $a,b\in \A$, for instance $abc$, we can just take instead $\tau(b)ac$ in order to compute moments respect to $\omega$. We describe the method applied to the above polynomial.

\begin{ejem}\label{a+ababa}
Given $a,b\in (\A,\omega,\tau)$ selfadjoint elements such that $a\in \Du$, consider the polynomial $a +babab$, where $(a,b)$ is cyclically monotone. By Proposition \ref{p10} we have that $(\{a,aba\},\{b\})$ is cyclically monotone.  Now, from the above theorem we have that $(a,aba)$ has the same distribution that of  $(a,\tau(b)a^2)$. Thus $\EV(a +babab)=\EV(a +b(\tau(b)a^2)b)$. Finally,  we can use part i) of Theorem \ref{T314} in order to find that the set of eigenvalues of $a+babab$ is the same that of $\EV\left(\sqrt{B}\operatorname{diag}(a,\tau(b)a^2)\sqrt{B} \right),$ where $$B = \begin{pmatrix} 1&\tau(b)\\ \tau(b)& \tau(b^2) \end{pmatrix}.$$
In Example \ref{a+ababa matrix}, we consider how to use this calculation to give the asymptotic behavior of the eigenvalues of a random matrix related to this polynomial.
\end{ejem}

\section{Applications to Random Matrices}
In an analogous way as Theorem \ref{T314} was combined with asymptotic cyclic monotone independence in order to get the limiting set of eigenvalues of some polynomials of random matrices, we can combine our Proposition \ref{p1} and Proposition \ref{p6} with the asymptotic cyclic monotone independence in order to get a new formula. The precise statement corresponding to Proposition \ref{p6} is the following.

\begin{prop}
\label{P7}
Let $n\in\mathbb{N}$. Let $U = U(n)$ be an $n\times n$ Haar unitary random matrix and $A  = A(n)$, $B_i = B_i(n)$, $C_j = C_j(n)$, $i,j=1,\ldots,k$, be $n\times n$ deterministic matrices such that
\begin{enumerate}
\item $A$ is Hermitian and $(A,\Tr_n)$ converges in distribution to a trace class operator operator $(a,\Tr_H)$ as $n\to\infty$,
\item $((B_1,C_1\ldots,B_k,C_k), \tr_n)$ converges in distribution to a $2k$-tuple of elements in a non-commutative probability space as $n\to\infty$.
\end{enumerate}
Under the assumption 2), let $\beta_{ij} = \lim_{n\to\infty} \tr_n(C_iB_j)$ and $B^\prime = (\beta_{ij})_{i,j=1}^k$. Let $\lambda_1,\ldots,\lambda_k$ be the $k$ eigenvalues of $B^\prime$ counting multiplicity. If 
$$ \sum_{i=1}^k (UB_iU^*)A(UC_iU^*)$$ is Hermitian, then
\begin{equation}
\lim_{n\to\infty} \EV \left( \sum_{i=1}^k (UB_iU^*)A(UC_iU^*)\right) = \bigsqcup_{i=1}^k \lambda_i\EV(a).
\end{equation}
\end{prop}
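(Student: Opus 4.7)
The plan is to reduce this statement to its abstract counterpart, Proposition \ref{p6}, by means of the random matrix convergence result. First, I would apply Theorem 4.3 of \cite{CHS} (the Haar-unitary asymptotic cyclic monotone independence theorem stated in the preliminaries) to the families $\{A\}$ and $\{B_1,C_1,\ldots,B_k,C_k\}$. This yields that the pair $(\{A\},\{UB_1U^*,UC_1U^*,\ldots,UB_kU^*,UC_kU^*\})$ is asymptotically cyclically monotone almost surely with respect to $(\Tr_n,\tr_n)$. In particular, there exist elements $a,b_1,c_1,\ldots,b_k,c_k$ in some non-commutative probability space $(\mathcal{A},\tau)$ with tracial weight $\omega$, with $a\in \Du$ trace class and selfadjoint (this from hypothesis (1)), such that the pair $(\{a\},\{b_1,c_1,\ldots,b_k,c_k\})$ is cyclically monotone, and such that for every $*$-polynomial $P$ with $P(0,\ldots,0,y_1,\ldots,y_{2k})=0$,
$$\lim_{n\to\infty}\Tr_n\bigl(P(A,UB_1U^*,UC_1U^*,\ldots,UB_kU^*,UC_kU^*)\bigr) = \omega\bigl(P(a,b_1,c_1,\ldots,b_k,c_k)\bigr).$$
Crucially, $\beta_{ij}=\tau(c_ib_j)$ by this convergence, so the matrix $B'$ in the statement coincides with the matrix $B'$ that would appear in the abstract setting.

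Next, I would compute directly the moments of $S:=\sum_{i=1}^k b_iac_i$, mimicking the key step in the proof of Proposition \ref{p6}. Expanding the $m$-th power and using cyclicity of $\omega$, each summand becomes $\omega(a c_{i_1} b_{i_2} a c_{i_2} b_{i_3}\cdots a c_{i_m} b_{i_1})$. The cyclic monotone independence formula then gives
$$\omega(S^m) = \sum_{i_1,\ldots,i_m=1}^{k} \omega(a^m)\prod_{j=1}^{m}\tau(c_{i_j}b_{i_{j+1}}) \;=\; \omega(a^m)\,\Tr_k\bigl((B')^m\bigr) \;=\; \omega(a^m)\sum_{\ell=1}^{k}\lambda_\ell^m,$$
where indices are read cyclically and we used $\Tr_k((B')^m)=\sum_\ell \lambda_\ell^m$.

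Finally, I would interpret this identity. The right-hand side equals $\sum_\ell \omega\bigl((\lambda_\ell a)^m\bigr)$, which is precisely the $m$-th moment associated with the multiset $\bigsqcup_{\ell=1}^{k}\lambda_\ell\EV(a)$; equivalently, it is $\Tr_k\otimes\omega$ of the $m$-th power of the selfadjoint trace class element $\operatorname{diag}(\lambda_1a,\ldots,\lambda_ka)\in M_k(\mathbb{C})\otimes\mathcal{A}$ (under the Hermiticity hypothesis on $S_n$, the moments on both sides are real and match a genuinely selfadjoint trace class operator). Invoking Proposition \ref{Cor2.6} to equate eigenvalue multisets from coinciding sequences of moments for selfadjoint trace class elements, and combining with the moment convergence provided by asymptotic cyclic monotone independence, gives
$$\lim_{n\to\infty}\EV\Bigl(\sum_{i=1}^{k}(UB_iU^*)A(UC_iU^*)\Bigr) \;=\; \bigsqcup_{\ell=1}^{k}\lambda_\ell\EV(a).$$

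The main obstacle is bookkeeping on the self-adjointness: the individual limits $b_i,c_i$ need not be selfadjoint, so Proposition \ref{p6} cannot be applied verbatim. The fix is to bypass it and redo the moment computation directly as above, using only the Hermiticity of $S_n$ (which passes to the limit at the level of moments) together with Proposition \ref{Cor2.6} on the concrete selfadjoint representative $\operatorname{diag}(\lambda_1 a,\ldots,\lambda_k a)$.
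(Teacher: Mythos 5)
Your proof is correct and follows exactly the route the paper intends: the paper states Proposition \ref{P7} without a written proof, merely indicating that it is obtained by combining Proposition \ref{p6} with the asymptotic cyclic monotone independence of $(\{A\},\{UB_1U^*,UC_1U^*,\ldots,UB_kU^*,UC_kU^*\})$ furnished by Theorem 4.3 of \cite{CHS}, which is precisely what you carry out. Your extra care in redoing the moment computation of Proposition \ref{p6} directly --- since the limiting elements $b_i,c_i$ need not be selfadjoint and only the sum is assumed Hermitian --- is a welcome refinement of a point the paper glosses over.
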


As the results of \cite{CHS}, we can consider only the case when the matrices $A_i, B_i$ and $C_i$ are deterministic, since by conditioning these matrices to be constant matrices, we can get the corresponding results for the case when $A_i, B_i$ and $C_i$ are random matrices independent of the matrix $U$ and $A$ and $(B_1,C_1,\ldots,B_k,C_k)$ converge in distribution to deterministic elements. The same can be done also for Proposition \ref{p1}.

\begin{ejem}
We illustrate the random asymptotic version in random matrices of Proposition \ref{p1}. Let $n=300$. Let $B_1,B_2$ and $B_3$ be $n\times n$ independent selfadjoint GUE random matrices, $D=\operatorname{diag}(2^0,2^{-1},\ldots,2^{-n+1})$, and $U_1,U_2$ be independent Haar unitary random matrices. We take $A_1 = D, A_2 = U_1DU_1^*$ and $A_3 = U_2DU_2^*$. Define the block matrices
\begin{equation}
A = \begin{pmatrix}
A_1 & A_2 \\ A_2^* &A_3
\end{pmatrix},\quad B = \begin{pmatrix}
B_1^2 & B_2^2 \\ B_2^2& B_3^2
\end{pmatrix}.
\end{equation}
We show a realization of the eigenvalues of $BAB$ and we compare them with the eigenvalues of $A^\prime B^\prime$, where $A^\prime$ is the limit operator of $A$ and
\begin{eqnarray*}
B^\prime &=& \lim_{n\to\infty}(\operatorname{Id}\otimes \tr_n) (B^2)\\ &=&  \begin{pmatrix}\displaystyle 
\lim_{n\to\infty} \tr_n(B_1^4+B_2^4) & \displaystyle \lim_{n\to\infty}\tr_n(B_1^2B_2^2 + B_2^2B_3^2) \\ \displaystyle \lim_{n\to\infty}\tr_n(B_2^2B_1^2 + B_3^2B_2^2) & \displaystyle \lim_{n\to\infty} \tr_n(B_2^4+B_3^4)
\end{pmatrix}  \\ &=& \begin{pmatrix}
4 &2\\2&4
\end{pmatrix}.
\end{eqnarray*}
In the last equality, we use the fact that the $B_1,B_2$ and $B_3$ asymptotically behaves as a free semicircular family. A plot of the first 15 eigenvalues of $BAB$ and $A^\prime B^\prime$, and a comparison of the first three moments are provided.
\begin{center}{
\begin{figure}[H]
\centering\includegraphics[scale=0.33]{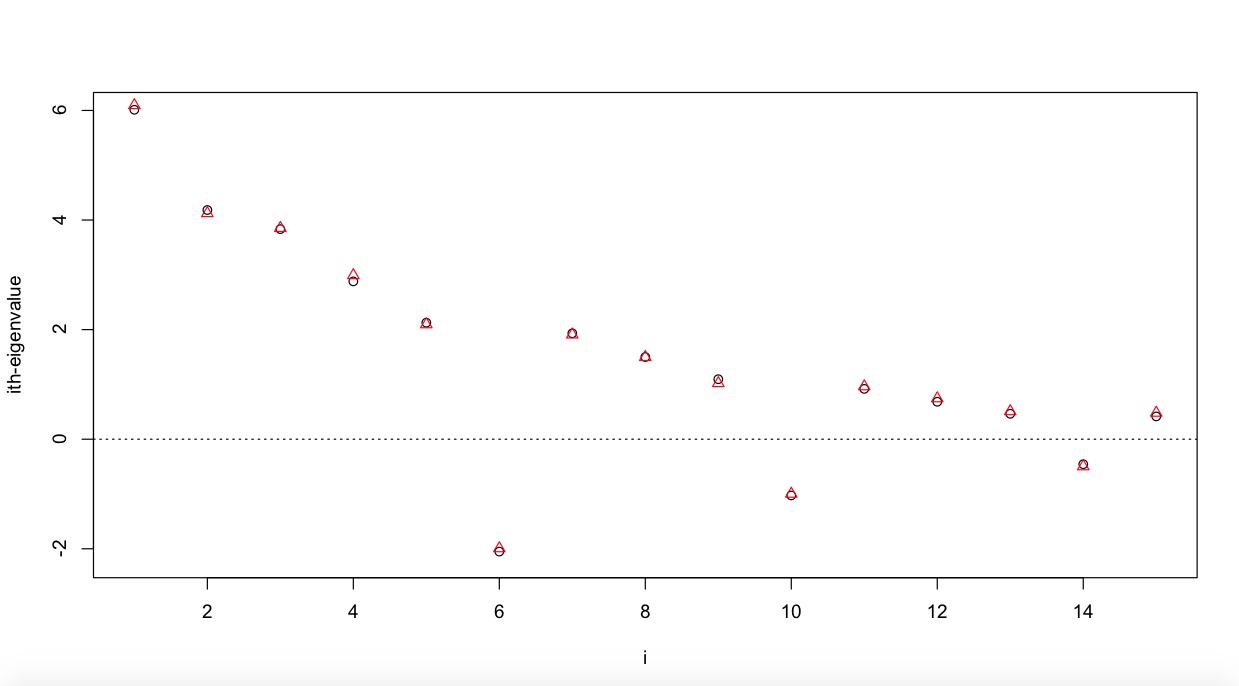}
\caption{Comparison between eigenvalues of $BAB$ (black circle) and $A^\prime B^\prime$ (red triangle).}
\end{figure}}
\end{center}

\begin{table}[H]
\begin{center}
\begin{tabular}{|c|c|c|}
\hline
 $k$& $\Tr((BAB)^k)$ &  $\Tr((A^\prime B^\prime)^k)$ \\
\hline
1&  23.70467 &24\\
2 & 95.90039&96.85024 \\
3&  383.8305&393.54777 \\
\hline
\end{tabular}
\caption{First three moments of $BAB$ and $A^\prime B^\prime$.}
\end{center}
\end{table}
\end{ejem}

\begin{ejem}
Now we give a numerical example for Proposition \ref{P7}. Let $n=300$. Consider the matrices $A = \operatorname{diag}(2^0,2^{-1},2^{-2},\ldots,2^{-n+1})$, $B$, $C$ independent GUE random matrices, and $U$ be an $n\times n$ Haar unitary random matrix, independent of $B$ and $C$. According to Proposition \ref{P7}, the limiting eigenvalues of 
\begin{equation}
\label{matriz}
UBU^*AUCU^* + UCU^*AUBU^*
\end{equation}
are $\lambda_1 \EV(a)\sqcup \lambda_2 \EV(a)$, where $a$ is the operator $\operatorname{diag}(2^0,2^{-1},\ldots,)$ and $\lambda_1,\lambda_2$ are the eigenvalues of the matrix
$$\lim_{n\to\infty}\operatorname{id}\otimes\; \tr_n\begin{pmatrix}
UCU^* UBU^* & UCU^*UCU^*\\UBU^*UBU^* &UBU^*UCU^* 
\end{pmatrix} = \lim_{n\to\infty}  \begin{pmatrix}
\tr_n(CB) & \tr_n(C^2) \\ \tr_n(B^2) & \tr_n(BC) 
\end{pmatrix} = \begin{pmatrix}
1 & 2\\ 2&1
\end{pmatrix},
$$and so $\lambda_1 = 3$ and $\lambda_2 = -1$. Hence, the limiting eigenvalues multiset is
$$\{3\cdot 2^{-n}, -2^{-n}\;:\;n\geq0\}.$$ A numerical realization of the first 15 eigenvalues of the matrix \eqref{matriz} is done in the next picture, where we compare with the theoretical limiting eigenvalues.

\begin{center}
\begin{figure}[H]
\centering\includegraphics[scale=0.4]{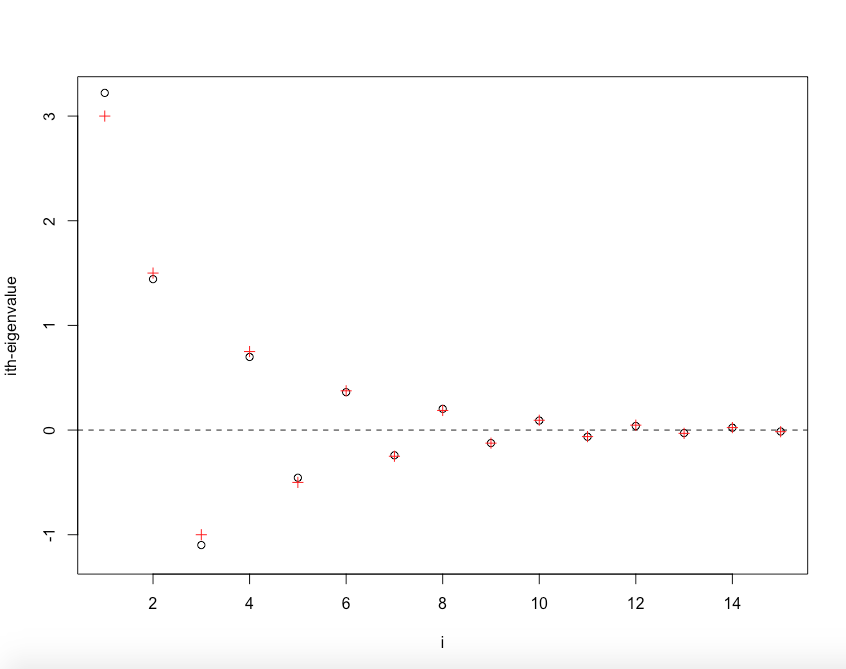}
\caption{Black circles correspond to the eigenvalues of a realization of the matrix \eqref{matriz}. Red crosses correspond to the limiting theoretical eigenvalues.}
\end{figure}
\end{center}
\end{ejem}

\begin{ejem} \label{a+ababa matrix} This is a continuation of Example \ref{a+ababa}. Let $n=300$, $D = \operatorname{diag}(2^{-1},2^{-2},\ldots, 2^{-n})$, $U$ be a Haar unitary random matrix, $A = UDU^*$, $G$ be a GUE random matrix, independent of $U$ and and $B = G^2$. Consider the matrix $X = A+ BABAB$. From Example \ref{a+ababa}, we have that
$$\lim_{n\to\infty} \EV(X) = \EV\left(\sqrt{B^\prime}\operatorname{diag}(a,a^2)\sqrt{B^\prime} \right),$$
where $a$ and $b$ are the limiting operators of $A$ and $B$, respectively, and $B^\prime = \begin{pmatrix} 1&1\\ 1& 2 \end{pmatrix}$.

\begin{center}
\begin{figure}[H]
\centering
\includegraphics[scale=0.45]{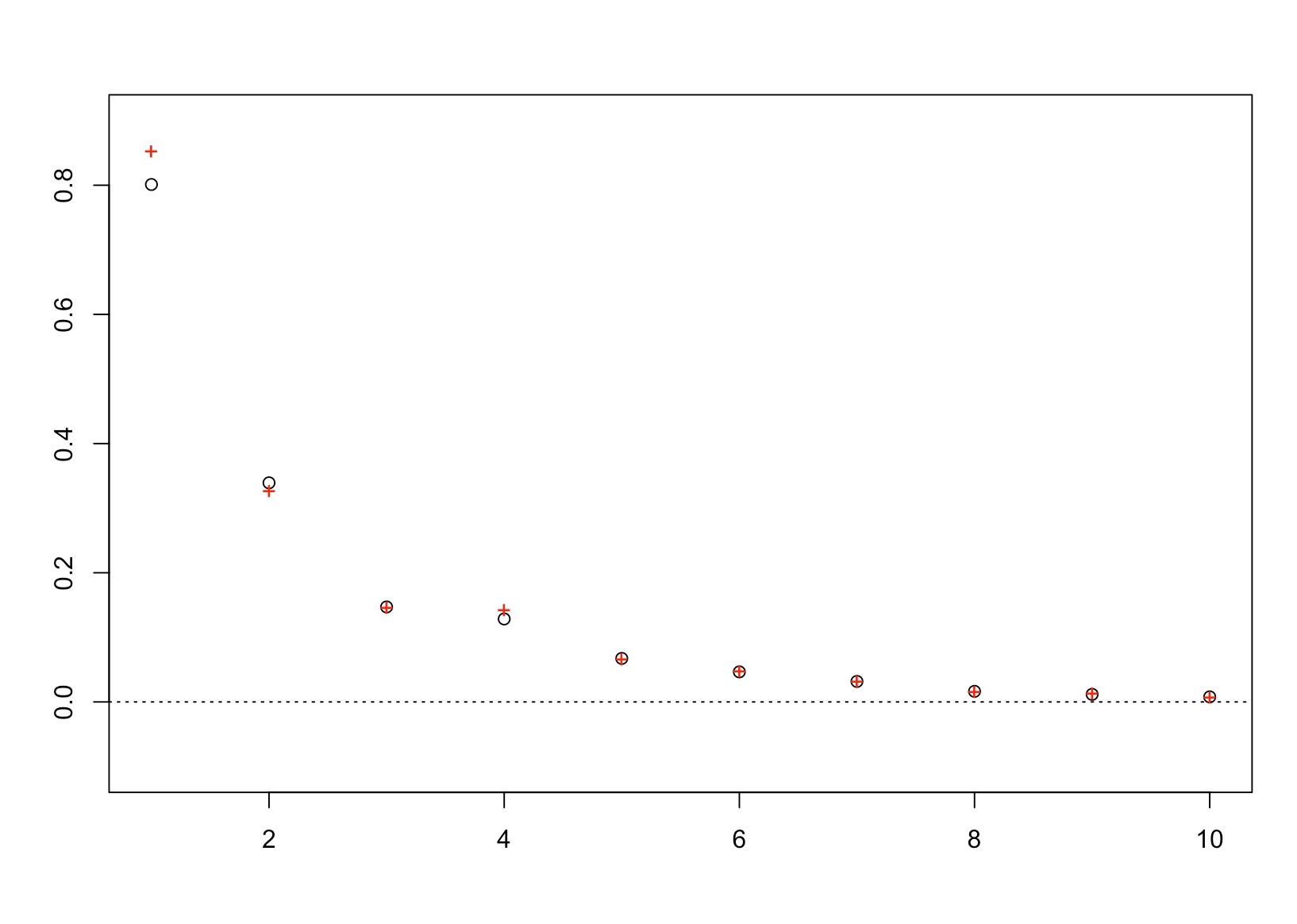}
\caption{Black circles correspond to the eigenvalues of a realization of the matrix $X$. Red crosses correspond to the limiting theoretical eigenvalues.}
\end{figure}
\end{center}
\end{ejem}


\end{document}